\newcommand{\bc}{{\mathbb C}}
\newcommand{\br}{{\mathbb R}}
\newcommand{\ra}{{\rightarrow}}
\newtheorem{thm}{Theorem}
\newtheorem{Prop}{Proposition}
\newtheorem{lemma}{Lemma}
\newtheorem{co}{Corollary}
\newtheorem{Def}{Definition}
\newenvironment{pf}{\begin{trivlist}\item[]{\bf Proof:\ }}
{\hfill\rule{.08in}{.08in}\end{trivlist}}
\let \cal \mathcal
\begin{document}
\title[Spectral finiteness]
{Isospectral finiteness on  convex cocompact hyperbolic 3-manifolds}
\author{Gilles Courtois and Inkang Kim }
\date{}
\maketitle

\begin{abstract}
In this paper we show that a given set of lengths of closed
geodesics, there are only finitely many convex cocompact hyperbolic
3-manifolds with that
specified length spectrum, homotopy equivalent to a given 3-manifold without a handlebody factor, up to orientation preserving isometries.
\end{abstract}
\footnotetext[1]{2000 {\sl{Mathematics Subject Classification.}}
51M10, 57S25.} \footnotetext[2]{{\sl{Key words and phrases.}} Isospectral rigidity, convex cocompact hyperbolic manifold, injectivity radius, algebraic convergence, incompressible core.}
\footnotetext[3]{The second author gratefully acknowledges the
partial support of grant  (NRF-2014R1A2A2A01005574), CNRS, Ecole
Polytechnique and  l'Institut de Math\'ematiques de Jussieu during his visit.}
\section{Introduction}
One of the main theme of Riemannian geometry is to find a criterion
for two manifolds to be isometric. Along the development in this
direction, the study of closed geodesics proved to be very useful in
many cases. The {\bf length spectrum} $\Lambda(M)$ of a Riemannian
manifold $M$ is the set of lengths of closed geodesics {\bf with
multiplicity}. If we consider the length spectrum without
multiplicity, we will say so explicitly. There exists a famous
conjecture called {\sl conjugacy problem}. If two compact negatively
curved manifolds have the {\sl same marked length spectrum}, then it
is conjectured that they are isometric. There is very little
progress on this conjecture except when one manifold is locally
symmetric \cite{BCG} or both are surfaces \cite{Otal, Croke}.

There is an anologous conjecture. It states that the set of
negatively curved metrics on a fixed compact manifold with a fixed
set of lengths of closed geodesics, forms a compact (or finite) set
in the space of Riemannian metrics. One can rephrase this as an {\sl
unmarked length rigidity}. There is also very little progress in
this direction. There is a close relationship between length spectrum
and Laplace spectrum. For negatively curved compact manifolds, the
latter determines the former, see \cite{V}. For surface case, two
notions are equivalent by Selberg trace formula.  McKean in the
70's showed \cite{Mc} that there exists only finite number of hyperbolic
metrics on a surface with a given {\sl Laplace spectrum}. Later
Osgood, Phillips and Sarnak \cite{Os} showed the compactness of
Laplacian isospectral metrics on a closed surface. Much later,
Brooks, Perry and Petersen \cite{Brooks} showed the same result for
closed 3-manifolds near a metric of constant curvature. More
recently, Croke and Sharafutdinov \cite{Croke} showed the non-existence of smooth one-parameter Laplacian isospectral deformation of a closed negatively curved
manifold. For Laplace spectrum and related problems, see
\cite{DG,GW,S,Su,Vi}.

In this paper, we deal with infinite volume case and prove a finiteness
result for convex cocompact real hyperbolic 3-manifold. The main
theorem is:
\begin{thm}Let $M$ be a  convex cocompact real hyperbolic 3-manifold (which is not a solid torus)
with a length spectrum $\Lambda$ with multiplicity. Suppose $\pi_1(M)$ does not have any free factor, i.e., $\pi_1(M)$ cannot be represented as $G* F_n$ where $F_n,\ n\geq 1$ is a free group of $n$-generators. Then there exist
only finite number of convex cocompact hyperbolic 3-manifolds homotopy equivalent to
$M$ with the length spectrum $\Lambda$.
\end{thm}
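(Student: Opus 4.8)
The plan is to argue by contradiction. Suppose there were infinitely many isometry classes and pick distinct representatives $N_k$; write $\Gamma=\pi_1(M)$, $\rho_k\colon\Gamma\ra\mathrm{PSL}_2(\bc)$ for their holonomies (well defined up to conjugacy and $\mathrm{Out}(\Gamma)$), and $C(N_k)$ for their convex cores. Since $M$ is convex cocompact and not a solid torus, $\Gamma$ is non-elementary, so each $\rho_k$ is irreducible and purely loxodromic. The only way the hypothesis $\Lambda(N_k)=\Lambda$ is used is this: for every $1\neq\gamma\in\Gamma$ the translation length satisfies $\ell_{\rho_k}(\gamma)\in\Lambda$, hence $\ell_{\rho_k}(\gamma)\ge\ell_0:=\min\Lambda>0$ (a uniform lower bound on the injectivity radius), and, writing $N(L)$ for the number of closed geodesics of length $\le L$ counted with multiplicity, $N(L)$ is one and the same finite number for every $k$, depending only on $\Lambda$. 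A packing estimate shows that an $\ell_0$-thick ball of radius $R$ in a hyperbolic $3$-manifold contains at least $c\,e^{cR}$ primitive conjugacy classes realized by geodesics of length $\le 2R+O(1)$; since $C(N_k)$ is entirely $\ell_0$-thick and $N(\,\cdot\,)$ is bounded, $C(N_k)$ has uniformly bounded diameter and volume. The intrinsic hyperbolic metric on $\partial C(N_k)$ likewise has no geodesic shorter than $\ell_0$ and has bounded area, hence stays in a compact part of moduli space, so the conformal boundary does not degenerate and the ends of $N_k$ have controlled geometry. Basing each $N_k$ at an $\ell_0$-thick framed point of $C(N_k)$, the pointed framed manifolds subconverge geometrically; identifying $\Gamma$ across the subsequence by the resulting almost-isometries (equivalently, adjusting the markings by elements of $\mathrm{Out}(\Gamma)$), the $\rho_k$ subconverge, after conjugation, algebraically to some $\rho_\infty\colon\Gamma\ra\mathrm{PSL}_2(\bc)$.

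The limit is again of the same kind. For each $1\neq\gamma$, $\ell_{\rho_\infty}(\gamma)=\lim_k\ell_{\rho_k}(\gamma)\ge\ell_0>0$; hence $\rho_\infty$ is faithful, has no parabolic elements, and $\rho_\infty(\Gamma)$ has $\ell_0$-separated orbits, so it is discrete. By the tameness theorem (Agol, Calegari--Gabai), $N_\infty:=\bh^3/\rho_\infty(\Gamma)$ is convex cocompact unless it has a geometrically infinite end, which would contain infinitely many closed geodesics of length bounded by a constant depending only on the fixed topology; feeding finitely many of the corresponding conjugacy classes back into the algebraic convergence would force the $N_k$ to have too many short geodesics, contradicting the bound on $N(L)$. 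So $N_\infty$ is convex cocompact; it is aspherical with $\pi_1=\Gamma$, hence homotopy equivalent to $M$, and, $\Lambda$ being closed and discrete with $\ell_{\rho_k}(\gamma)\to\ell_{\rho_\infty}(\gamma)$, its length spectrum is again $\Lambda$, the multiplicities being matched by strong convergence. Thus the family is sequentially compact.

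It remains to show the family is locally finite near any member $N_\infty$, for then the $N_k$ cannot be infinitely many distinct manifolds and the theorem follows. By Ahlfors--Bers, convex cocompact structures near $\rho_\infty$ are parametrized by the Teichm\"uller space $\mathcal{T}$ of the conformal boundary, on which each complex length $\lambda_\gamma$ is holomorphic and $\rho\mapsto(\mathrm{tr}\,\rho(\gamma))_\gamma$ is an injective holomorphic immersion onto a locally closed subset of the character variety. If $\rho\in\mathcal{T}$ is near $\rho_\infty$ and $\Lambda(\rho)=\Lambda$, then by persistence of short geodesics under small perturbation and the discreteness of $\Lambda$ the geodesics of $\rho$ of length $\le L$ are represented by exactly the conjugacy classes representing those of $\rho_\infty$; comparing the two equal multisets of lengths gives $\ell_\rho(\gamma)=\ell_{\rho_\infty}(\gamma)$ for all $\gamma$. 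So, locally, unmarked isospectrality upgrades to equality of \emph{marked} real length spectra, and it suffices to know that the marked real length spectrum of a convex cocompact hyperbolic $3$-manifold whose fundamental group has no free factor determines it up to orientation. Here the hypothesis is indispensable: by Grushko's theorem the absence of a free factor means the Scott core of $N_\infty$ decomposes, along its essential disks, into pieces with incompressible boundary and no handlebody summand, and on each such piece the marked real length spectrum is rigid, whereas a free factor $\bz$ — already the excluded case of the solid torus — contributes a continuum of marked-isospectral structures obtained by rotating a loxodromic axis. Combining this local rigidity with the compactness of the previous step, the family is finite.

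The weight of the argument falls on two places. The first is the geometric precompactness: extracting algebraic convergence of the holonomies, and non-degeneration of the conformal boundary, from nothing but the counting function $N(L)$, which is exactly where the injectivity-radius and packing estimates do their work. The second is the marked length rigidity in the no-free-factor case: the obstruction is that the real translation length $\mathrm{Re}\,\lambda_\gamma$ is only pluriharmonic, not holomorphic, on $\mathcal{T}$, so the spurious twist deformations along free factors have to be excluded by a separate argument using the Grushko/compression-body decomposition and the behaviour of length functions on each incompressible piece; the solid torus shows the hypothesis cannot be dropped.
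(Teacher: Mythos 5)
Your argument has a genuine gap at the compactness step, and it sits exactly where the compressible boundary causes trouble. You derive a uniform diameter bound for $C(N_k)$ from the claim that an $\ell_0$-thick ball of radius $R$ contains at least $ce^{cR}$ closed geodesics of length $\le 2R+O(1)$. That estimate is false: a ball of radius $R$ centered deep inside a long, thin compression tube of the convex core (the $1$-handle dual to a short meridian of the conformal boundary) meets $C(N_k)$ in an essentially simply connected set and contains no closed geodesic of bounded length at all, yet such tubes can make the diameter of $C(N_k)$ arbitrarily large without contributing to the counting function $N(L)$. This is precisely why the paper introduces the covering-type injectivity radius of Definition \ref{inj} and why Theorem \ref{coreinject} carries the extra hypothesis that this quantity is bounded below. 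Note also that your compactness argument nowhere uses the no-free-factor hypothesis; if it were correct it would equally prove finiteness for Schottky groups (handlebodies), a case the paper explicitly flags as open. The paper closes the gap differently: it decomposes $M$ into incompressible cores $M_j$ joined by $1$-handles, applies the incompressible-boundary finiteness theorem (Theorem \ref{incompressible-bis}) to the covers $N_i^j$ so that, after passing to a subsequence, each is isometrically constant in $i$, and then shows that if some handle degenerates, every geodesic crossing it becomes arbitrarily long (Lemma \ref{meridian} together with Theorem \ref{B-C}), so the lengths realized in $N_i$ below a fixed threshold are only those coming from the pieces --- which contradicts equality of the length spectra \emph{with multiplicity} when compared against a fixed geodesic of $N_1$ crossing a handle. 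That multiplicity comparison is the substantive content your packing estimate was trying to replace.

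Two smaller points. You locate the role of the no-free-factor hypothesis in marked length rigidity, but that is not where it acts: any Zariski-dense representation into a rank-one group is determined up to conjugacy by the translation lengths of a finite determining set (Lemma \ref{determining}), free factor or not; the ``rotate a loxodromic axis'' deformation exists only in the elementary case $\Gamma\cong\bz$, which is the excluded solid torus. The hypothesis is needed for the compactness and decomposition step above. Also, your upgrade from unmarked to marked isospectrality near $\rho_\infty$ by ``comparing equal multisets'' does not by itself exclude a permutation of lengths among conjugacy classes; the correct and standard fix, used throughout the paper, is to apply the discreteness of $\Lambda$ to the finitely many elements of a determining set, pass to a subsequence on which those finitely many lengths are constant, and then invoke Lemma \ref{determining}.
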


For convex cocompact boundary-incompressible hyperbolic 3-manifold
case, it was treated in \cite{Kim} but the proof was incomplete. In this paper we give a complete
proof for  boundary incompressible case in Theorem \ref{incompressible-bis} and a generalization to  boundary compressible case in our main theorem.

For reader's convenience, we outline the proof of the main theorem.
Fix a convex cocompact hyperbolic 3-manifold $M$. We argue by
contradiction considering a sequence of hyperbolic 3-manifolds
homotopy equivalent to $M$ with a fixed length spectrum. The basic
idea is to find an algebraically convergent subsequence to derive a
contradiction. This process reduces the unmarked length spectrum
problem to a marked length spectrum one. In Theorem
\ref{incompressible-bis}, we show that given a discrete set
$\Lambda$, there are only finitely many convex cocompact hyperbolic
3-manifolds homotopy equivalent to $M$ with incompressible boundary,
whose length spectrum without multiplicity is contained in
$\Lambda$. This follows from Theorem \ref{ca} that one can find an
algebraically convergent subsequence if there is a lower bound for
the injectivity radius in  boundary incompressible case.

For general case in the sense that the boundary might be
compressible,  we decompose the manifold $M$ into the union of
incompressible core $\cup M_j$ connected by 1-handles where each
$M_j$ has incompressible boundary.  If $N_i$ is an infinite sequence
of convex cocompact hyperbolic 3-manifolds homotopy equivalent to
$M$ and with a fixed length spectrum with multiplicity, using
Theorem \ref{incompressible-bis}, we can assume that the covering
manifolds $N_i^j$ corresponding to $M_j$ are all isometric to each
other for a fixed $j$. Next if the lengths of the 1-handles attached are
all bounded, then by Theorem \ref{coreinject}, it is finished. If
the length of some handle goes to infinity, we argue that $N_i$ do
not have the same length spectrum with multiplicity. The general case
is treated in Thereom \ref{nonhandle}.

In section \ref{rankone}, we treat the similar result for the convex
cocompact surface group representations in rank one semisimple Lie
groups and obtain the following. 
\begin{thm}\label{main2}
Let $G$ be a semisimple real Lie group  of rank one of noncompact
type.
 Fix $\Lambda$ a discrete set
of positive real numbers, and a closed surface $S$ of genus $\geq
2$. Then the set of  convex cocompact representations
$\rho:\pi_1(S)\ra G$ with $\Lambda_\rho=\Lambda$ is finite up to
conjugacy  and the change of marking. Here $\Lambda_\rho$ is the set of translation lengths of
$\rho(\pi_1(S))$.
\end{thm}

We hope that we can generalize the argument to non-surface group case  in any
semisimple Lie groups in the near future.
\vskip .1 in
{\bf Acknowledgements.} The authors are grateful to R. Canary for
pointing out the reference  \cite{Canary1} and to anonymous referees for pointing out some inaccuracies in the earlier version, specially  Chris Croke for having pointed out a mistake in a previous version of this paper.

\section{preliminaries}\label{pre}
A real hyperbolic manifold is a locally symmetric Riemannian
manifold with constant negative sectional curvature, which is of
the form $H^n_\br/\Gamma$ where $\Gamma$ is a torsion free
discrete subgroup of $Iso(H^n_\br)$, the isometry group of $H^n_\br$. $H^n_\br$ is topologically an
open unit ball in $\br^n$ and one can compactify it by attaching
$S^{n-1}$ which is called the sphere at infinity. $Iso(H^n_\br)$
naturally acts on $S^{n-1}$ as conformal maps. A {\sl limit set}
$L_\Gamma$ of $\Gamma$ is $S^{n-1}\cap \overline{\Gamma x_0}$
where $x_0\in H^n_\br$ is a base point and the closure is with
respect to the Euclidean topology in the unit ball.
$\Omega(\Gamma)=S^{n-1}\setminus L_\Gamma$ is called the domain of
discontinuity which is the largest $\Gamma$-invariant open set of
$S^{n-1}$ on which $\Gamma$ acts properly discontinuously. For
$n=3$, $S^2=\hat \bc$ and if $\Gamma$ is not abelian,
$\Omega(\Gamma)$ inherits a hyperbolic metric. In this case,
$\Omega(\Gamma)/\Gamma$ is called the {\bf conformal boundary} of
$H^3_\br/\Gamma$ which is a hyperbolic surface.

A convex hull $CH(\Gamma)$ of $\Gamma$ is the smallest closed convex set
in $H^n_\br$ which is invariant under $\Gamma$.
$CH(\Gamma)/\Gamma$ is called the convex core $C(\Gamma)$ of
$H^n_\br/\Gamma$. If  $C(\Gamma)$ is compact, $\Gamma$
is called convex cocompact. For $n=3$, the boundary of the convex core
inherits a hyperbolic metric with respect to the path metric on
it. For any point $x\in H^n_\br$, there is a closest point in
$CH(\Gamma)$ since $CH(\Gamma)$ is a closed convex set, so there
exists a natural map
$$r:\Omega(\Gamma)/\Gamma \ra \partial C(\Gamma),$$ called the nearest
point retraction from the conformal boundary to the convex core
boundary. Using this map, one can easily see that $C(\Gamma)$ and
$(H^n_\br\cup \Omega(\Gamma))/\Gamma$ are homeomorphic if $\Gamma$
is convex cocompact and Zariski dense. In fact, $C(\Gamma)$ is a deformation retract
of $H^n_\br/\Gamma$. This is true for general negatively curved
manifolds. If $\partial(H^3_\br/\Gamma\cup \Omega(\Gamma)/\Gamma)$
is incompressible, this retraction map is $K$-Lipschitz
for some universal $K$ independent of the manifold \cite{Sull}. But
if the manifold has compressible boundary, such a universal constant
does not exist.

\begin{Def}\label{inj}Let $M$ be a Riemannian manifold without boundary. For $x$, the
injectivity radius $inj(x)$ of $x$ is defined to be the supremum of
$r>0$ such that the metric ball $B_r(x)$ in $M$ is isometric to the
$r$-ball in the universal cover $\tilde M$. For a compact convex subset $A\subset
M$, the injectivity radius of $A$ is defined to be the maximum of $r>0$ such that a finite number of $r$-balls $\overline B_r(x_i)$ (homeomorphic to the closed $r$-ball in the universal cover) contained in $A$ whose union, in such a way that $d(x_i, x_j)> r$,  $\cup\overline B_r$ is homotopy equivalent to $A$, and together with a finite number of $ r$-balls $B_r(y_j)$, not necessarily contained in $A$, in such a way that $d(y_i, y_j)>r, d(x_i, y_j)>r$,
can cover $A$.
\end{Def} The  injectivity radius of the subset of the manifold is defined in the way that if it has a long
thin compression 1-handle, then the injectivity radius is small. We will use this definition to deal with the 3-manifold with compressible boundary, specially the convex core of such a 3-manifold. See section \ref{3-manifold1}.
\begin{Def} Let $(M_i,\omega_i)$ be a sequence of hyperbolic
manifolds with orthonormal base frames. Then $(M_i,\omega_i)$ converge
geometrically to $(M,\omega)$ if and only if, for each compact
submanifold $K \subset M$ containing the base frame $\omega$, there
are smooth embeddings $f_i:K \ra M_i$, defined for all sufficiently
large $i$, such that $f_i$ sends $\omega$ to $\omega_i$ and $f_i$
tends to an isometry in $C^\infty$ topology, i.e., all the
derivatives of any order converge  uniformly on $K$ to the
derivatives of the same order of an isometry. See \cite{BP}.
\end{Def}
In practice, we use $(M_i, x_i)$ where $x_i$ is a base point of the frame $w_i$ since the set of base frames at $x_i$ is compact. This is a sequence of pointed
manifolds instead of a sequence of manifolds with base frames.

It is known that such $M$ exists if the injectivity radius at the
base point of the base frame is bounded below for all $i$. See
\cite{Mcc, BP}. For general Riemannian metrics, {\bf Gromov's
compactness theorem} states that: the set of isometry classes of
closed Riemannian $n$-manifolds with uniformly bounded curvatures,
diameters bounded above, and volumes bounded below, is precompact in
the $C^{1,\alpha}$ topology for any $\alpha<1$. More precisely, for
any sequence of Riemannian $n$-manifolds $(M_i,g_i)$ satisfying the
above conditions, there exist a subsequence, also called
$(M_i,g_i)$, and diffeomorphisms $\phi_i:M_\infty \ra M_i$ so that
the metrics $\phi_i^*g_i$ converge in the $C^{1,\alpha}$ topology to
a limit metric $g_\infty$ on $M_\infty$.

A compact irreducible 3-manifold $M$ has an {\bf incompressible
core}, which is a collection $\{M_1,\cdots,M_n\}$ of submanifolds of
$M$ such that $M$ is obtained from this collection by adding
1-handles and each $M_i$ has incomressible boundary, see \cite{MM}.
In this paper, we deal with 3-manifolds with almost incompressible boundary in the sense of McCullough \cite{McC}, i.e., $\pi_1(M)$ does not have
any free group factor.

A simple closed curve in a boundary $\partial M$ of $M$ is called a
{\bf meridian} if it is nontrivial in $\partial M$ but trivial in
$M$. By loop theorem, it bounds a compression disk. The following
lemma is practical. Before we prove the lemma, we need some
terminologies.

Any topological annulus is conformally equivalent to a Euclidean
annulus, i.e., an annulus bounded by two concentric circles. The
modulus of an annulus $A$ bouned by concentric circles of radius
$r_2>r_1$ is
$$mod(A)=\frac{1}{2\pi}\log(\frac{r_2}{r_1}).$$
In \cite{Sugawa} (Theorem 5.2), it is shown that if a conformal boundary
contains a meridian of length $L \leq 1$, then it contains a
topological annulus in the universal cover with modulus $C/L$ where $C$ can be taken as
$\frac{\pi}{\sqrt{e}}$.
\begin{lemma}\label{meridian}For a given $\epsilon$ there is  $K(\epsilon)$ such that $K(\epsilon)\ra
\infty$ as $\epsilon \ra 0$ with the following conditions. Let $M$
be a hyperbolic 3-manifold with a compressible boundary. Let $m$
be a meridian which bounds a compression disk $D$. If the length
of the meridian in a conformal boundary is $\epsilon$, then the
length of the geodesic intersecting $D$ transversely is bigger
than $K(\epsilon)$.
\end{lemma}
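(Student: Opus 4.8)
The plan is to pass to the universal cover and use the annulus-modulus estimate quoted from \cite{Sugawa} together with a standard collar/tube comparison in $H^3_\br$. Let $M = H^3_\br/\Gamma$ and let $m$ be the meridian of length $\epsilon$ in a component $\Sigma$ of the conformal boundary $\Omega(\Gamma)/\Gamma$. Since $m$ bounds the compression disk $D$ in $M$, the element $\gamma \in \Gamma$ represented by $m$ is trivial, so $m$ lifts to a closed curve $\tilde m$ in the universal cover $\tilde\Sigma \subset \Omega(\Gamma)$. By the cited Theorem 5.2 of \cite{Sugawa}, for $\epsilon \le 1$ the curve $\tilde m$ is the core of an embedded topological annulus $A \subset \tilde\Sigma$ of modulus at least $C/\epsilon$ with $C = \pi/\sqrt e$. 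Pushing $A$ into $H^3_\br$ via the nearest-point retraction (or simply viewing $\tilde\Sigma$ as a round sphere minus the limit set and using that the hyperbolic convex hull of $A$ is a solid region), this large-modulus annulus forces a definite amount of ``room'' transverse to the disk $\tilde D$ lifting $D$.

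The key step is to convert the modulus bound into a width bound for an embedded tube around the axis of any geodesic crossing $D$ transversely. First I would set $K(\epsilon)$ to be an increasing function of $C/\epsilon$, concretely of the form $K(\epsilon) = 2\,\mathrm{arcsinh}\!\bigl(\sinh(\pi C/\epsilon)\cdot c_0\bigr)$ or some such explicit comparison constant, chosen so that $K(\epsilon)\to\infty$ as $\epsilon\to 0$; the precise form is immaterial and will fall out of the estimate below. Let $\delta$ be a closed geodesic in $M$ intersecting $D$ transversely, represented by $g\in\Gamma$, and let $\alpha$ be its axis in $H^3_\br$. Since $\delta$ meets $D$ transversely and essentially (it is not homotopic off $D$, as $m = \partial D$ is a meridian while $\delta$ is a geodesic, hence $g$ is not a power of $\gamma = 1$), the axis $\alpha$ must pass through the solid region bounded by $\tilde D$, i.e.\ $\alpha$ punctures $\tilde D$. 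The embedded annulus $A$ of modulus $C/\epsilon$ around $\partial\tilde D$ on the sphere at infinity sweeps out, under the geodesic flow toward $\tilde D$, an embedded collar of definite geometric width $w(\epsilon)$ around $\tilde D$ inside $H^3_\br$, with $w(\epsilon)\to\infty$ as $\mathrm{mod}(A)\to\infty$ — this is the standard relation between the modulus of a boundary annulus and the width of the associated collar in the convex hull, as in Canary's work on the nearest-point retraction. Because this collar is embedded (the annulus $A$ embeds in $\tilde\Sigma$ and the retraction is injective on it), the translate $g\alpha$ is disjoint from the $w(\epsilon)$-neighborhood of $\alpha$ near $\tilde D$, except that $g\alpha$ crosses $\tilde D$; unwinding, the translation length of $g$ along $\alpha$ is at least the length of the portion of $\alpha$ trapped inside this embedded collar, which is bounded below by $K(\epsilon)$.

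I would organize the write-up as: (1) lift $m$ to $\tilde m\subset\tilde\Sigma$ and invoke \cite{Sugawa} to get the annulus $A$ of modulus $\ge C/\epsilon$; (2) prove the modulus-to-width lemma, that a boundary annulus of modulus $\mu$ produces an embedded collar of width comparable to $\mu$ (this is the one genuinely technical point, handled by an explicit computation with round annuli and their hyperbolic convex hulls, using that the convex hull of a Euclidean annulus $\{r_1<|z|<r_2\}$ on $\hat\bc=\partial H^3_\br$ contains a geodesic-tube of radius $\approx \log(r_2/r_1)$ about the geodesic with endpoints $0,\infty$); (3) observe that a geodesic $\delta$ crossing $D$ transversely has its axis threading $\tilde D$ and hence crossing this collar, so its translation length exceeds the collar's ``length'' $K(\epsilon)$; (4) set $K(\epsilon)$ accordingly and note $K(\epsilon)\to\infty$.

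The main obstacle is step (2): making precise and quantitative the passage from ``large modulus of a boundary annulus'' to ``wide embedded tube in the interior meeting the compression disk.'' One must be careful that the collar obtained is genuinely embedded (so that the geodesic $\delta$, which is embedded in $M$ and primitive, cannot shortcut through it) and that one is measuring the right quantity — the length of $\alpha$ inside the tube, not merely the tube radius. Normalizing so that $\tilde D$ is a flat disk bounded by a round circle, with the annulus $A$ the standard round annulus, turns everything into an elementary hyperbolic-trigonometry estimate in the upper half-space model, at the cost of checking that the retraction $r$ restricted to $A$ does not fold. Once that normalization is in place the rest is routine.
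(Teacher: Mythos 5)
Your proposal follows essentially the same route as the paper's proof: invoke Sugawa's theorem to get an annulus of modulus $C/\epsilon$ about the meridian, lift it to an annulus in $\hat{\bc}$ (using that the meridian is nullhomotopic in $M$), and bound below the length of any geodesic that must thread the hyperbolic convex hull of a round annulus, which gives $2\pi\,\mathrm{mod}(A)\ra\infty$. The one step you gloss over --- you cannot simply ``normalize'' the topological annulus to be round --- is exactly what the paper supplies by citing Herron--Liu--Minda, which yields a round annulus inside $\tilde R$ with modulus at least $\mathrm{mod}(\tilde R)-0.502$; with that standard substitute your argument coincides with the paper's.
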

\begin{pf}Suppose $\epsilon$ is small so that the modulus of a
topological annulus $R$ whose core is a meridian $m$, is
$C/\epsilon>0.6$. Lift it to the universal cover of $M$. Since $m$
is a meridan, $R$ lifts to again an annulus $\tilde R$ in $\hat
\bc$. Then by \cite{HLM}, there is a Euclidean annulus $A$ inside
$\tilde R$ with modulus
$$mod(A)\geq mod(\tilde R)-\frac{1}{\pi}\log2(1+\sqrt 2)\geq mod(\tilde R)-0.502.$$
Let $CH(A)$ be the convex hull of the annulus $A$. If a geodesic
$\gamma$ intersects the compression disk $D$ transversely, $\tilde
\gamma$ should pass through $CH(A)$, and so its length is at least
$2\pi mod(A)$. If $\epsilon$ tends to zero, $mod(\tilde R)$ tends
to infinity, so does $2\pi mod(A)$.
\end{pf}
Once there is a lower bound for the lengths of meridians, one can
compare the lengths of curves in  conformal and convex core
boundary. See \cite{BC}.  From now on we drop the subscript $\br$ for simplicity to denote the real hyperbolic 3-space $H^3$.
\begin{thm}
\label{B-C} For any $\epsilon >0$, there is a constant $K>0$
depending only on $\epsilon$ with the following conditions. Let
$\Gamma$ be a finitely generated Kleinian group without torsion
such that the shortest meridian length is greater than $\epsilon$.
Let $C(\Gamma)$ be the convex core of $H^3/\Gamma$, and consider
the nearest point retraction $r: \Omega_\Gamma/\Gamma \rightarrow
\partial C(\Gamma)$. Then $r$ is $K$-Lipschitz and has a
homotopically inverse $K$-Lipschitz map.
\end{thm}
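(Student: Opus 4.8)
The plan is to lift the nearest point retraction to the universal cover, prove a uniform \emph{local} Lipschitz estimate for it and for a candidate inverse, and then globalize. Work in the ball model of $H^3$, so that $\Omega_\Gamma\subset S^2$ carries the Poincar\'e metric while $\partial CH(\Gamma)$, being a bent (pleated) surface, carries a hyperbolic path metric by Gauss--Bonnet. The nearest point retraction lifts to a $\Gamma$-equivariant map $\tilde r:\Omega_\Gamma\to\partial CH(\Gamma)$: $\tilde r(x)$ is the first point of $CH(\Gamma)$ met by the ball internally tangent to $S^2$ at $x$ as it is inflated. For the homotopy inverse one uses the reverse construction $\tilde s$: from $y\in\partial CH(\Gamma)$ flow outward along the geodesic through $y$ orthogonal to a support plane of $CH(\Gamma)$ at $y$ until reaching $S^2$ (with a definite choice along bending lines, as in Epstein--Marden). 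Both $\tilde s\circ\tilde r$ and $\tilde r\circ\tilde s$ displace every point a uniformly bounded hyperbolic distance along one of these geodesic arcs, so after passing to the quotients $s\circ r$ and $r\circ s$ are homotopic to the identity; hence it suffices to bound the Lipschitz constants of $r$ and of $s$ by a function of $\epsilon$ alone.

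Since $\Omega_\Gamma/\Gamma$ and $\partial C(\Gamma)$ are length spaces, it is enough to exhibit a constant $K=K(\epsilon)$ and a scale $\delta=\delta(\epsilon)>0$ such that the lifts $\tilde r$ and $\tilde s$ are $K$-Lipschitz on every metric ball of radius $\delta$; subdividing a nearly length-minimizing path into pieces of length below $\delta$ and summing then yields the global bound on the quotients. The engine for the local estimate is the Epstein--Marden analysis of $\tilde r$: its infinitesimal distortion at $x$ is governed, up to universal multiplicative constants, by the radius of the largest ball of $H^3$ that is tangent to $S^2$ at $x$ and disjoint from $CH(\Gamma)$ --- equivalently, by how ``round'' $\Omega_\Gamma$ looks from $x$ --- and this roundness is in turn controlled, again up to universal constants, by the injectivity radius of $\Omega_\Gamma/\Gamma$ at the corresponding point. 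When the relevant component of $\Omega_\Gamma$ is simply connected this produces a \emph{universal} constant; this is precisely the classical incompressible case of Sullivan and Epstein--Marden recalled above. The new difficulty in the compressible case is that $\Omega_\Gamma/\Gamma$ may carry arbitrarily short essential curves, which a priori destroys the local estimate.

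This is where the hypothesis enters. A short essential curve in a component of $\Omega_\Gamma/\Gamma$ is either a meridian or not. If it is not a meridian, it is essential in $\pi_1(H^3/\Gamma)=\Gamma$, so it corresponds to a short geodesic (or a cusp) of $H^3/\Gamma$; around such a thin part the conformal boundary and the convex core boundary degenerate in the same, Margulis-controlled way, so $r$ and $s$ remain uniformly $K$-Lipschitz there at the scale of the Margulis thickness. If it is a meridian, the hypothesis forces its length to be at least $\epsilon$, so by the collar lemma the modulus of any embedded annulus in $\Omega_\Gamma/\Gamma$ with that meridian as core is at most a definite $M(\epsilon)$ (this is the converse of the phenomenon quantified by Sugawa's estimate \cite{Sugawa} and Lemma~\ref{meridian}, namely that a short meridian forces a long thin annulus and a long crossing geodesic): the compression ``handle'' cut off by the meridian is therefore not arbitrarily long and thin. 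Inside such an $\epsilon$-controlled compression collar one estimates $\tilde r$ and $\tilde s$ by hand: the lift of the collar lies in a region of $H^3$ bounded by two support hyperplanes of $CH(\Gamma)$ whose relative position is controlled by $\epsilon$, and comparing the Poincar\'e metric of the annulus with the induced path metric of the strip of $\partial CH(\Gamma)$ between those hyperplanes gives the required $K(\epsilon)$-Lipschitz bound at scale $\delta(\epsilon)$. Combining the three cases --- thick part, Margulis tubes and cusps, and $\epsilon$-controlled compression collars --- gives the uniform local estimate, and hence the theorem.

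I expect the main obstacle to be exactly this last case: turning the modulus bound for meridian-cored annuli into an honest Lipschitz estimate for the retraction across a compression collar, where simple connectivity is unavailable and one must instead control how the two support hyperplanes bounding the ``neck'' of $CH(\Gamma)$ approach one another as the meridian shrinks down to length $\epsilon$ --- in effect quantifying the interaction between the bending of $\partial CH(\Gamma)$ along the handle and the Poincar\'e metric of the annulus in $\Omega_\Gamma$. The detailed verification is the main content of \cite{BC}, to which we refer.
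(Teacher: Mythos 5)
The paper contains no proof of this theorem: it is quoted directly from Bridgeman--Canary \cite{BC}, with only the surrounding remark that a lower bound on meridian lengths permits a comparison of lengths in the conformal boundary and in the convex core boundary. So there is nothing in the paper to measure your argument against; it has to stand on its own, and at present it does not. Your outline is a fair reconstruction of the \emph{strategy} of \cite{BC} --- lift to the universal cover, reduce to a uniform local Lipschitz estimate for $\tilde r$ and a candidate inverse $\tilde s$, and split $\Omega_\Gamma/\Gamma$ into a thick part, collars of short non-compressible curves, and collars of moderately short meridians --- but every step that carries actual content is asserted rather than proved. The claim that the infinitesimal distortion of $\tilde r$ is controlled ``up to universal constants'' by the injectivity radius of the Poincar\'e metric is essentially the theorem itself on the thick part; making it precise requires the Epstein--Marden derivative estimates for the nearest point retraction together with a comparison between the Poincar\'e metric and the Thurston (or quasihyperbolic) metric of $\Omega_\Gamma$, none of which you carry out. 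More seriously, the case you yourself identify as the main obstacle --- the local estimate across a compression collar whose core meridian has length comparable to $\epsilon$, which is exactly the new content of \cite{BC} beyond the incompressible case of \cite{Sull} --- is dispatched with ``one estimates $\tilde r$ and $\tilde s$ by hand'' followed by a deferral to \cite{BC}. A proof whose decisive step is a citation of the result being proved is a citation, not a proof.

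Two further points of substance. First, your claim that a short non-meridian curve in $\Omega_\Gamma/\Gamma$ corresponds to a short geodesic (or cusp) of $H^3/\Gamma$ is not automatic: it requires the Bers-type inequality $\ell_{H^3/\Gamma}(\gamma)\le 2\,\ell_{\Omega_\Gamma/\Gamma}(\gamma)$, which should be stated and sourced, since without it the matching of thin parts upstairs and downstairs collapses. Second, your write-up reads as though the homotopy inverse $s$ were the delicate direction, whereas the map that genuinely fails to be uniformly Lipschitz without the meridian hypothesis is $r$ itself: a meridian of Poincar\'e length $\epsilon$ is sent by $r$ to a compressible essential curve on $\partial C(\Gamma)$, whose intrinsic length is bounded below by a universal constant (at least $2\pi$, by a Gauss--Bonnet argument on the compressing disk), so $r$ stretches that curve by a factor of order $1/\epsilon$. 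This is the phenomenon already flagged in Section \ref{pre} of the paper and quantified in Lemma \ref{meridian}; your proof should make clear that it is precisely this stretching that the hypothesis on meridian lengths is designed to bound.
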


The bottom spectrum $\lambda_0$ of Laplacian is the smallest
eigenvalue of the Laplacian over the set of compactly supported
smooth functions, $C^\infty_0(N)$, which is equal to
$$\inf_{f\in C^\infty_0(N)}(\frac{\int_N |\nabla f|^2}{\int_N
f^2}).$$ Equivalently it is the largest value of $\lambda$ for
which there exists a positive $C^\infty$ function $f$ on $N$ such
that $\triangle f+ \lambda f=0$. Then it is clear that if $\tilde
N$ covers $N$, then $\lambda_0(N)\leq \lambda_0(\tilde N)$.
Furthermore by Buser \cite{Bu}, see \cite{Ca1} for a complete
argument, it is known that
\begin{Prop}\label{buser}Let $N$ be an infinite volume, pinched negatively curved complete Riemannian
manifold. Then $$ \lambda_0 \leq R \frac{Vol\partial C(N)}{Vol
C(N)},$$ where $R$ depends only on the dimension of the manifold
and the curvature bounds.
\end{Prop}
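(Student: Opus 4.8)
The plan is to derive the inequality as an upper bound for $\lambda_0(N)$ coming from a single explicit test function supported in a neighbourhood of the convex core $C=C(N)$, and then estimating its Rayleigh quotient. Recall that $\lambda_0(N)=\inf_f\int_N|\nabla f|^2/\int_N f^2$, the infimum being over $C^\infty_0(N)$, hence also over compactly supported Lipschitz functions (approximated in $W^{1,2}$ by smooth ones). So it suffices to exhibit one Lipschitz $f$ lying in the $W^{1,2}$-closure of $C^\infty_0(N)$ with Rayleigh quotient at most $R\,\mathrm{Vol}(\partial C(N))/\mathrm{Vol}(C(N))$, where $R=R(n,\text{curvature bounds})$ and $n=\dim N$.

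First I would set up the geometry of the ends. Let $t(x)=d(x,C)$; since the lift of $C$ to the universal cover is closed and convex, $t$ is $1$-Lipschitz with $|\nabla t|\equiv1$ on $N\setminus C$, and the normal exponential map identifies $N\setminus\mathrm{int}\,C$ with $\partial C\times[0,\infty)$, with $t$ the second coordinate. Write $\Sigma_s=\partial C\times\{s\}$ and $A(s)=\mathrm{Area}(\Sigma_s)=\frac{d}{ds}\mathrm{Vol}(\{t\le s\})$, so $A(0)=\mathrm{Vol}(\partial C(N))$. The single geometric input I need is a bound of the form
$$A(s)\ \le\ e^{cs}\,\mathrm{Vol}(\partial C(N)),\qquad s\ge0,$$
with $c=c(n,\text{curvature bounds})$. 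I discuss this last; granting it, the rest is routine.

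Next I would take $f\equiv1$ on $C$ and $f(x)=e^{-c\,t(x)}$ on $N\setminus C$. Then $f$ is Lipschitz with $|\nabla f|=c\,e^{-ct}$ off $C$, and the coarea formula along $t$ gives
$$\int_N|\nabla f|^2=c^2\int_0^\infty e^{-2cs}A(s)\,ds\ \le\ c^2\,\mathrm{Vol}(\partial C(N))\int_0^\infty e^{-cs}\,ds=c\,\mathrm{Vol}(\partial C(N)),$$
so $f\in W^{1,2}(N)$ and, being a decaying profile, $f$ lies in the $W^{1,2}$-closure of $C^\infty_0(N)$ (truncate to $\{t\le T\}$ and let $T\to\infty$). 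Since also $\int_N f^2\ge\int_C 1=\mathrm{Vol}(C(N))$, we obtain
$$\lambda_0(N)\ \le\ \frac{\int_N|\nabla f|^2}{\int_N f^2}\ \le\ c\,\frac{\mathrm{Vol}(\partial C(N))}{\mathrm{Vol}(C(N))},$$
which is the assertion with $R=c$. (To use strictly compactly supported functions, replace $e^{-cs}$ by $\psi_T(s)=(e^{-cs}-e^{-cT})/(1-e^{-cT})$ on $[0,T]$; this gives $R=c/(1-e^{-cT})$, and $T\to\infty$ recovers $R=c$.)

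The substance, and the step I expect to be the main obstacle, is the area bound $A(s)\le e^{cs}\mathrm{Vol}(\partial C(N))$. Convexity of $C$ makes the shape operator $S_s$ of $\Sigma_s$ (with respect to the outward normal) nonnegative for $s>0$; on the smooth locus of $\partial C$ it starts bounded (convexity together with the lower curvature bound $K\ge -b^2$ controls the principal curvatures of a convex hull boundary), and the Riccati equation along the normal geodesics then keeps $\mathrm{tr}\,S_s$ bounded by a constant $c_0=c_0(n,b)$ for all $s\ge0$; integrating $A'(s)=\int_{\Sigma_s}\mathrm{tr}\,S_s$ over that locus yields the bound $e^{c_0 s}\mathrm{Vol}(\partial C(N))$ there. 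The delicate point is the non-smooth (bending) locus of $\partial C(N)$, which has measure zero in $\partial C$ but opens for $s>0$ into a ``fan''; one checks that its area grows at a strictly smaller exponential rate and vanishes as $s\to0$, so it only affects the constant $c$. Equivalently, one may approximate $C(N)$ from outside by smooth convex sets and pass to the limit. This is precisely the computation of \cite{Bu}, carried out in full detail in \cite{Ca1}, which I would invoke rather than reproduce.
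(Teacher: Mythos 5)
Your reduction of the Proposition to the single estimate $A(s)\le e^{cs}\,\mathrm{Vol}(\partial C(N))$ is clean, and the Rayleigh--quotient computation that follows it is correct; but note that this is a genuinely different route from the paper's, which simply quotes Buser's Cheeger-type inequality $\lambda_0\le R\kappa\,h(N)$ and observes that $h(N)\le \mathrm{Vol}(\partial C(N))/\mathrm{Vol}(C(N))$ by taking $A=C(N)$ as a test domain in the definition of the Cheeger constant. The difference is not cosmetic. The Cheeger constant only sees the perimeter of $C(N)$ itself, equivalently $\lim_{s\to 0^{+}}A(s)$ (a soft statement, since the contribution of the singular part of $\partial C(N)$ to $A(s)$ vanishes as $s\to 0$), whereas your test function needs the area of \emph{every} equidistant surface $\Sigma_s$ to be controlled by $\mathrm{Vol}(\partial C(N))$, uniformly in $s$. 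That is a strictly stronger statement, and it is exactly where your sketch has a gap.

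Concretely, two assertions in your final paragraph are false. First, convexity together with the curvature bounds does \emph{not} control the principal curvatures of $\partial C(N)$: the boundary of a convex hull is a bent (in dimension $3$, pleated) hypersurface whose second fundamental form is only a measure and can be arbitrarily concentrated, so there is no initial bound to feed into the Riccati equation. Second, the singular locus does \emph{not} contribute at a strictly smaller exponential rate: in $H^3$ one has $\mathcal H^2(\Sigma_s)=\cosh^2(s)\,\mathcal H^2(\partial C)+\sinh(s)\cosh(s)\,L(\beta)+\cdots$, where $L(\beta)$ is the total mass of the bending measure; this grows at the same rate $e^{2s}$, and its coefficient is not bounded by a multiple of $\mathcal H^2(\partial C)$ by any soft argument --- in dimension $3$ one needs Bridgeman's universal bound $L(\beta)\le K\,|\chi(\partial C(N))|$ on the length of the bending lamination, a genuine theorem, and in the generality in which the Proposition is stated (pinched variable curvature, arbitrary dimension) no such bound is available. (Relatedly, the normal exponential identification of $N\setminus \mathrm{int}\,C$ with $\partial C\times[0,\infty)$ must be replaced by the unit outward normal bundle, which over singular points of $\partial C$ carries positive measure.) To repair your argument you must either supply the total-bending bound, restricting to the hyperbolic $3$-dimensional case that the paper actually uses, or fall back on Buser's inequality \cite{Bu}, \cite{Ca1} as the paper does, letting its proof carry the burden of relating the perimeter of a fixed domain to the bottom of the spectrum.
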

\begin{pf}
If the Ricci curvature of a complete Riemannian $n$-manifold $N$
is bounded below by $-(n-1)\kappa^2$, then $\lambda_0 \leq R\kappa
h(N)$ where $R$ depends only on $n$ and $h(N)$ is the Cheeger
constant which is defined to be the infimum, over all compact
$n$-submanifolds $A$ of $N$, of $\frac{Vol(\partial A)}{Vol(A)}$.
From this, the claim follows.
\end{pf}
For a similar result, see \cite{BuC}. For convex cocompact locally
symmetric rank 1 manifold $X/\Gamma$, it is known that
\begin{lemma}\label{bottom}$D(h-D)\leq \lambda_0(\Gamma)$ where $D$ is the
Hausdorff dimension of the limit set $L_\Gamma$ and $h$ is the
Hausdorff dimension of the boundary of $X$.
\end{lemma}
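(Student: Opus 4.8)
The plan is to construct an explicit positive smooth function $\phi$ on the quotient $N=X/\Gamma$ satisfying $\triangle\phi+D(h-D)\phi=0$, and then to invoke the characterization of $\lambda_0$ recalled above: $\lambda_0(\Gamma)$ is the largest $\lambda$ for which there is a positive $C^\infty$ solution of $\triangle f+\lambda f=0$ on $N$. Since $\phi$ is such a solution with $\lambda=D(h-D)$, this forces $D(h-D)\le\lambda_0(\Gamma)$, which is exactly the claim. (If $\Gamma$ is elementary the statement is trivial, since then $D=0$; so assume $\Gamma$ non-elementary.)

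To produce $\phi$ I would use Patterson--Sullivan theory. For a convex cocompact non-elementary group $\Gamma$ acting on the rank one symmetric space $X$, the Hausdorff dimension $D$ of the limit set $L_\Gamma$ — measured in the visual (Carnot--Carath\'eodory) metric on $\partial X$ — equals the critical exponent $\delta(\Gamma)$ of the Poincar\'e series, and there is a $\Gamma$-equivariant conformal density $\{\mu_x\}_{x\in X}$ of dimension $D$ supported on $L_\Gamma$: each $\mu_x$ is a finite measure, $\gamma_\ast\mu_x=\mu_{\gamma x}$, and $\frac{d\mu_x}{d\mu_y}(\xi)=e^{-D\beta_\xi(x,y)}$, where $\beta_\xi$ is the Busemann cocycle normalized along unit speed geodesics. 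I then set
$$\phi(x)=\|\mu_x\|=\int_{\partial X}e^{-D\beta_\xi(x,x_0)}\,d\mu_{x_0}(\xi),$$
i.e.\ the total mass of $\mu_x$, equivalently the integral over $L_\Gamma$ of the $D$-th power of the Poisson kernel based at a fixed $x_0$. Then $\phi>0$ because $L_\Gamma\neq\emptyset$ and the integrand is positive and continuous; $\phi$ is smooth in $x$ by differentiation under the integral sign; and $\phi$ is $\Gamma$-invariant, since $\phi(\gamma x)=\|\mu_{\gamma x}\|=\|\gamma_\ast\mu_x\|=\|\mu_x\|=\phi(x)$. Hence $\phi$ descends to a positive smooth function on $N$.

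The key computation is the eigenvalue identity on $X$: for every $\xi\in\partial X$ the function $x\mapsto e^{-s\beta_\xi(x,x_0)}$, a power of the Poisson kernel, satisfies $\triangle e^{-s\beta_\xi}=-s(h-s)\,e^{-s\beta_\xi}$, where $h$ is the volume entropy of $X$ — equivalently, the Hausdorff dimension of $\partial X$ in the visual metric ($h=n-1$ for $X=H^n_\br$, $h=2n$ for complex hyperbolic $n$-space, and so on). For $X=H^n_\br$ this is a direct computation in the upper half-space model (with $e^{-s\beta_\infty}=t^{s}$, one finds $\triangle t^{s}=s(s-h)t^{s}$); in general these Poisson-kernel powers are the standard $\Gamma$-invariant eigenfunctions used by Sullivan and Corlette. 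Differentiating $\phi$ twice under the integral sign and using this identity gives $\triangle\phi=-D(h-D)\phi$ on $X$, hence on $N$, and the lemma follows from the variational characterization of $\lambda_0$.

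The main obstacle is purely the eigenvalue identity together with the simultaneous identification of $h$ as the volume entropy and as $\dim_H\partial X$ in the non-real rank one cases, and the correct normalization of the Busemann cocycle making $e^{-s\beta_\xi}$ the relevant eigenfunction; once these are in hand the argument is formal. For $X=H^n_\br$ this is Sullivan's classical argument (in fact $\lambda_0=D(h-D)$ when $D\ge h/2$, and $\lambda_0\le h^2/4$ always), and for the remaining rank one symmetric spaces one assembles the analogous facts from the theory of conformal densities on those spaces (Corlette and others). In all cases, the inequality $D(h-D)\le\lambda_0$ is the ``easy'' half, obtained directly from the eigenfunction $\phi$ constructed above.
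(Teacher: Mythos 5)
Your proposal is correct and follows essentially the same route as the paper: the paper's (one-line) proof also takes $u(x)=\int e^{-DB(x,\theta)}d\mu_0(\theta)$, the Patterson--Sullivan measure integrated against the $D$-th power of the Poisson kernel, notes $\triangle u=D(h-D)u$, and concludes via the characterization of $\lambda_0$ as the largest $\lambda$ admitting a positive eigenfunction. You have simply spelled out the equivariance, positivity, and Busemann-cocycle details that the paper leaves implicit.
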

\begin{pf}The function defined by $u(x)=\int
e^{-DB(x,\theta)}d\mu_0(\theta)$ has $\triangle u=D(h-D) u$, and
so $D(h-D)\leq \lambda_0$.
\end{pf}
For convex cocompact pinched negatively curved manifold $M$ it is
known that the Hausdorff dimension of the limit set is equal to
$$\lim_{R\ra \infty} \frac{\log \#\{\gamma|l_M(\gamma)
\leq R\}}{R},$$ where $l_M(\gamma)$ is the length of the geodesic representative
in the free homotopy class of $\gamma$. Also the critical exponent of the Poincar\'e
series is equal to the Hausdorff dimension of the limit set, see
\cite{Y}. So length spectrum determines the Hausdorff dimension of
the limit set. For a Hadamard manifold $X$, if $\gamma$ is an isometry acting
on $X$, the {\sl translation length} $l_X(\gamma)$ of $\gamma$ is defined by
$$\inf_{x\in X} d(x,\gamma x).$$
From these facts we obtain:
\begin{thm}\label{volume} Let $N=H^3/\Gamma$ be a convex cocompact, infinite volume
hyperbolic 3-manifold, and let $D$ denote the Hausdorff dimension of
the limit set $L_\Gamma$.
Then $D(2-D)\leq \lambda_0\leq C \frac{|\chi(\partial
C(N))|}{vol(C(N))}$ for some universal constant $C$.
\end{thm}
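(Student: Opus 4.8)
The plan is to read off the left-hand inequality from Lemma \ref{bottom} and to obtain the right-hand inequality by feeding Proposition \ref{buser} into the Gauss--Bonnet theorem for the boundary of the convex core.

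For the lower bound, apply Lemma \ref{bottom} with $X=H^3$. The boundary at infinity of $H^3$ is $S^2$, which has Hausdorff dimension $h=2$, so the lemma gives $D(2-D)\le\lambda_0(\Gamma)=\lambda_0(N)$. Concretely, the Patterson--Sullivan function $u(x)=\int e^{-DB(x,\theta)}\,d\mu_0(\theta)$ is a positive solution of $\triangle u=D(2-D)u$ on $N$, and the variational characterisation of $\lambda_0$ recalled above forces $D(2-D)\le\lambda_0$.

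For the upper bound, note that since $N=H^3/\Gamma$ is convex cocompact of infinite volume (and $\Gamma$ is nonelementary, so that the convex core has genuine boundary), the convex core $C(N)$ is a compact submanifold with nonempty boundary $\partial C(N)$, and $N$ is a complete, constantly negatively curved manifold. Proposition \ref{buser} therefore applies and yields
$$\lambda_0\le R\,\frac{Vol\,\partial C(N)}{Vol\,C(N)},$$
where $R$ depends only on the dimension $n=3$ and on the curvature bounds; since both are fixed here, $R$ is a universal constant. Now $\partial C(N)$, equipped with its intrinsic path metric, is a pleated surface in $N$ and hence a closed hyperbolic surface of curvature $-1$, so by Gauss--Bonnet $Vol\,\partial C(N)=2\pi|\chi(\partial C(N))|$. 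Substituting gives $\lambda_0\le 2\pi R\,|\chi(\partial C(N))|/vol(C(N))$, and taking $C=2\pi R$ completes the argument.

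I do not anticipate a genuine obstacle: the statement is essentially an assembly of the results collected in this section. The two points that deserve a word of care are (i) verifying that the constant $R$ of Proposition \ref{buser} is truly universal in our setting, which holds because the dimension and the curvature are both fixed, and (ii) the identification $Vol\,\partial C(N)=2\pi|\chi(\partial C(N))|$, which rests on the classical fact that the boundary of the convex core of a hyperbolic $3$-manifold carries an intrinsic hyperbolic metric, so that Gauss--Bonnet applies verbatim.
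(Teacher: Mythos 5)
Your proof is correct and follows the same route as the paper, which simply cites Proposition \ref{buser} and Lemma \ref{bottom}; you have merely supplied the details the paper leaves implicit, namely taking $h=2$ for $\partial_\infty H^3$ and using Gauss--Bonnet on the intrinsically hyperbolic convex core boundary to rewrite $Vol\,\partial C(N)$ as $2\pi|\chi(\partial C(N))|$.
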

\begin{pf}
The inequality follows  from Proposition \ref{buser} and Lemma
\ref{bottom}.
\end{pf}

\section{Some examples}
%
Mahler's criterion \cite{cha} states that if $G$ is a Lie group and
$U$ is a neighborhood of the identity $e$, then the set of subgroups
$\Gamma$ of $G$ such that $\Gamma \cap U=\{e\}$ is a compact set
$\cal U_G$ with respect to the Chabauty topology. To see the
relation between Chabauty topology and geometric topology in Gromov
sense, see \cite{BP}.  First observe the following useful fact
\cite{Kim1}, which will be used throughout the paper.
\begin{lemma}\label{determining}Let $\Gamma$ be a fixed finitely generated group.
Let $\rho:\Gamma \ra G$ be a Zariski dense
representation into a real semi-simple Lie group of rank 1. Then
there is a finite generating set $\{\gamma_1,\cdots,\gamma_k\}$
depending only on $\Gamma$ so that the representation is
determined up to conjugacy by the marked length spectrum on this
finite generating set.
\end{lemma}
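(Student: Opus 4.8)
The plan is to split the assertion into two parts: (i) a rigidity statement, that the full marked length spectrum $\gamma\mapsto \ell_\rho(\gamma)$ of a Zariski dense representation $\rho\colon\Gamma\ra G$ into a rank one group determines $\rho$ up to conjugacy; and (ii) a Noetherianity argument reducing (i) to finitely many elements $\gamma_1,\dots,\gamma_k$ that can be chosen uniformly in $\rho$, depending only on $\Gamma$. The reason a $\rho$-independent finite set can be hoped for is that if $S=\{s_1,\dots,s_n\}$ generates $\Gamma$, then for every Zariski dense $\rho$ the algebraic subgroup generated by $\rho(s_1),\dots,\rho(s_n)$ is already all of $G$; so no particular element of $\Gamma$ is ``invisible'' to the family $\{s_i\}$, and the combinatorics are governed by the single variety $\mathrm{Hom}(\Gamma,G)$.

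For (i) I would argue as follows. Since $\rho$ is Zariski dense, $\rho(\Gamma)$ is a non-elementary group of isometries of the pinched negatively curved symmetric space $X=G/K$. The marked length spectrum determines, through the usual limit formulas for Gromov products of translates, all cross ratios of quadruples of fixed points of loxodromic elements, hence it determines the $\rho$-equivariant boundary map $\partial_\infty\Gamma\ra\partial_\infty X$ up to a cross-ratio preserving homeomorphism of $\partial_\infty X$. In real rank one such homeomorphisms are exactly the boundary values of isometries of $X$, so two Zariski dense representations with the same marked length spectrum differ by an element of $G$; this is essentially the content of \cite{Kim1}, which I would invoke directly.

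For (ii) the key observation is that in real rank one the translation length is a semialgebraic function of the representation. Fix a faithful linear representation $\pi\colon G\hookrightarrow GL(V)$ with a $K$-invariant inner product; for $\rho(\gamma)$ loxodromic, $e^{\ell_\rho(\gamma)}$ is the spectral radius of $\pi(\rho(\gamma))$, and because the restricted root system has rank one the remaining eigenvalue moduli are then forced. Concretely, in the model $G=\mathrm{Isom}_0(H^3)=SO_0(3,1)$, one has $\mathrm{charpoly}(\rho(\gamma))=(t^2-2\cosh\ell_\rho(\gamma)\,t+1)(t^2-2\cos\theta\,t+1)$, so $\cosh\ell_\rho(\gamma)$ is the larger root of a quadratic whose coefficients are polynomials in the entries of $\rho(\gamma)$. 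Thus, for a fixed exhaustion $F_1\subset F_2\subset\cdots$ of $\Gamma$ by finite sets, $V^\rho_{F_i}=\{\rho'\colon \ell_{\rho'}|_{F_i}=\ell_\rho|_{F_i}\}$ is a decreasing chain of semialgebraic subsets of the affine real algebraic variety $\mathrm{Hom}(\Gamma,G)$ (cut out of $G^n$ by finitely many of the relations of $\Gamma$, since $G^n$ is Noetherian), whose intersection over all $i$ meets the Zariski dense locus exactly in the $G$-orbit of $\rho$ by part (i). To make the stabilization uniform in $\rho$, I would run the argument on the incidence set: let $Z_i\subset\mathrm{Hom}(\Gamma,G)\times\mathrm{Hom}(\Gamma,G)$ be the set of pairs $(\rho,\rho')$, both Zariski dense, with $\ell_{\rho'}|_{F_i}=\ell_\rho|_{F_i}$ and $\rho'$ not conjugate to $\rho$. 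Each $Z_i$ is constructible (on the Zariski dense locus the $G$-orbits are closed with finite stabilizer, so non-conjugacy is an open condition there), the $Z_i$ decrease, and $\bigcap_i Z_i=\emptyset$ by part (i); Noetherianity of $\mathrm{Hom}(\Gamma,G)\times\mathrm{Hom}(\Gamma,G)$ then forces $Z_N=\emptyset$ for some $N$, and $F:=F_N$, a set depending only on $\Gamma$, has the required property.

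The main obstacle is precisely this uniformity step: translation length functions are not regular on the representation variety, so one cannot simply quote finite generation of the coordinate ring of the character variety, and one must check that ``equal length on $F_i$'' together with ``non-conjugate'' define constructible sets in a fixed variety. The points needing care are that, on the Zariski dense locus, $\rho\mapsto G\cdot\rho$ has closed orbits, and that degenerations are harmless — a loxodromic with pinned length could in principle limit onto a parabolic of zero length — which is why I phrase the closing step via the constructible incidence sets $Z_i$ rather than by extracting a convergent subsequence of representations directly. Part (i) itself, if not simply cited from \cite{Kim1}, is the other substantial ingredient.
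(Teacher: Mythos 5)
There is a genuine gap in your uniformity step (ii), and it is exactly at the point you yourself flag as the main obstacle. The sets $Z_i$ you build are not Zariski-constructible: the condition $\ell_{\rho'}(\gamma)=\ell_{\rho}(\gamma)$ requires selecting the eigenvalue of \emph{largest} modulus (equivalently, the larger root of your quadratic $t^2-bt+c$), which is an inequality, and ``$\rho'$ not conjugate to $\rho$'' on the real points of $\mathrm{Hom}(\Gamma,G)\times\mathrm{Hom}(\Gamma,G)$ is likewise only semialgebraic (images of real algebraic maps are semialgebraic, not constructible). Semialgebraic sets do not satisfy the descending chain condition — $(0,1/i)\subset\br$ is a strictly decreasing chain with empty intersection and no empty term — and in fact even constructible sets fail DCC ($\ba^1\setminus\{1,\dots,i\}$); only Zariski-\emph{closed} sets terminate. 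So ``$\bigcap_i Z_i=\emptyset$ plus Noetherianity implies $Z_N=\emptyset$'' does not go through, and no Zariski-closure trick repairs it, since the closures can have nonempty intersection while the $Z_i$ do not. A model-theoretic compactness substitute also fails in the needed direction: one would have to know the rigidity statement (i) over arbitrary real closed fields, whereas its proof is genuinely analytic (boundary maps, limits of Gromov products). As written, nothing in your argument excludes two sequences of pairwise non-conjugate Zariski dense representations agreeing in length on larger and larger finite sets.

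For comparison, the paper does not prove this lemma at all; it cites \cite{Kim1} (see also \cite{Kim2}), where the finite determining set is produced \emph{explicitly} from a generating set $\{s_1,\dots,s_n\}$ — essentially the generators together with products of pairs and triples, with cross-ratios of fixed points recovered from translation lengths of words such as $\gamma^m\delta^n$ — which is why the lemma can assert that the set depends only on $\Gamma$. Your part (i) is fine (it is the theorem of \cite{Kim1}), but to complete part (ii) you would need either to reproduce such an explicit coordinate system, or to find a genuinely closed (real algebraic) reformulation of the length-equality and non-conjugacy conditions, which the eigenvalue-modulus selection obstructs.
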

For a similar result in surface, see \cite{BCo}. One can easily
deduce the following.
\begin{Prop}\label{rank1}Let $G$ be a semisimple Lie group of rank 1 with $X=G/K$ and $\Lambda$ a discrete
set of positive real numbers.  Then the set of convex cocompact manifolds $\Gamma
\backslash G / K$ with
 diameter for the convex core bounded above by $R$ and with a
 length spectrum   {\bf without  multiplicity} contained in $\Lambda$, is finite
up to isometry.
\end{Prop}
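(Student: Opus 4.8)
The plan is to argue by contradiction, using the discreteness of $\Lambda$ together with the diameter bound to force Chabauty compactness (Mahler's criterion), and then to convert the resulting unmarked, finitely-valued length data into rigid \emph{marked} length data via Lemma \ref{determining}. At the outset one must take $\Gamma$ Zariski dense in $G$ (this is implicit in the paper --- compare the hypotheses of Lemma \ref{determining} --- and it cannot be dropped: a loxodromic of prescribed translation length has a positive-dimensional space of rotational parts, so a non-elementary Fuchsian group twisted by a compact normal factor already yields a positive-dimensional family of pairwise non-isometric convex cocompact manifolds with bounded convex core and one and the same real length spectrum). So suppose, for contradiction, that there is an infinite sequence $M_i=\Gamma_i\backslash X$ of pairwise non-isometric convex cocompact manifolds with $\Gamma_i$ Zariski dense, $\mathrm{diam}\,C(\Gamma_i)\le R$, and length spectrum without multiplicity contained in $\Lambda$.

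First I would record the free injectivity bound. A discrete subset of $(0,\infty)$ is closed and bounded away from $0$, so $\ell_0:=\inf\Lambda>0$. Since $\Gamma_i$ is torsion-free and convex cocompact, every nontrivial $\gamma\in\Gamma_i$ is loxodromic and $\ell_X(\gamma)$ is the length of a closed geodesic of $M_i$, hence $d(x,\gamma x)\ge\ell_X(\gamma)\ge\ell_0$ for all $x\in X$; thus $\mathrm{inj}(M_i)\ge\ell_0/2$ everywhere. Now pick any $x_0$ in the convex hull $\mathrm{CH}(\Gamma_i)$; since $\mathrm{diam}\,C(\Gamma_i)\le R$ the orbit $\Gamma_i x_0$ is $R$-dense in $\mathrm{CH}(\Gamma_i)$, and after conjugating $\Gamma_i$ in $G$ we may take $x_0$ the same for all $i$. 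The injectivity estimate gives $\Gamma_i\cap U=\{e\}$ for the fixed identity neighbourhood $U=\{g\in G:d(x_0,gx_0)<\ell_0\}$, so by Mahler's criterion the $\Gamma_i$ stay in a compact subset of the Chabauty space of $G$; passing to a subsequence, $\Gamma_i\to\Gamma_\infty$ in the Chabauty topology, equivalently $(M_i,x_0)\to(M_\infty,x_\infty)$ geometrically with $M_\infty=\Gamma_\infty\backslash X$. The limit $\Gamma_\infty$ is discrete, torsion-free (the neighbourhood $U$ persists) and non-elementary. Here the diameter bound does its real work: it makes $\Gamma_i$ generated by a uniformly bounded number of elements of uniformly bounded displacement of $x_0$, so $C(\Gamma_i)$ is covered by at most $N=N(R,\ell_0)$ balls of radius $\ell_0/2$ uniformly in $i$; this uniformly bounded geometry passes to the limit, whence $\Gamma_\infty$ is convex cocompact with $\mathrm{diam}\,C(\Gamma_\infty)\le R$ and Zariski dense, and moreover, by the standard stability of Chabauty/geometric convergence under a uniform injectivity bound with non-degenerating convex core (\cite{BP, Mcc}), for all large $i$ there are isomorphisms $\phi_i:\Gamma_\infty\to\Gamma_i$ with $\phi_i(\gamma)\to\gamma$ in $G$ for every $\gamma\in\Gamma_\infty$.

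Finally I would invoke Lemma \ref{determining}. Put $\rho_i:=\iota_i\circ\phi_i:\Gamma_\infty\to G$, with $\iota_i:\Gamma_i\hookrightarrow G$ the inclusion; each $\rho_i$ is a Zariski dense representation of the fixed finitely generated group $\Gamma_\infty$. Lemma \ref{determining} produces a finite set $S\subset\Gamma_\infty$, depending only on $\Gamma_\infty$, such that a Zariski dense representation of $\Gamma_\infty$ into $G$ is determined up to conjugacy by the tuple of translation lengths $\big(\ell_X(\rho(s))\big)_{s\in S}$. For each $s\in S$ write $\ell_X(\rho_i(s))=\ell_X(\phi_i(s))$; this is $k_s$ times a primitive closed-geodesic length of $M_i$ for a fixed positive integer $k_s$ (the largest $k$ with $s$ a $k$-th power in $\Gamma_\infty$, which $\phi_i$ preserves), hence it lies in the discrete closed set $k_s\Lambda$, while $\ell_X(\rho_i(s))\to\ell_X(s)$ because $\phi_i(s)\to s$. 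A convergent sequence in a discrete closed subset of $\br$ is eventually constant, and $S$ is finite, so for all large $i$ the tuple $\big(\ell_X(\rho_i(s))\big)_{s\in S}$ equals $\big(\ell_X(s)\big)_{s\in S}$, independently of $i$. By Lemma \ref{determining}, $\rho_i$ and $\rho_j$ are $G$-conjugate for all large $i,j$, i.e. $M_i$ is isometric to $M_j$ --- contradicting pairwise non-isometry. The one genuinely delicate point is the middle paragraph: producing a limit group that is itself convex cocompact with controlled convex core and carrying the almost-isomorphisms $\phi_i$; this is precisely what the diameter hypothesis secures, and (together with Zariski density) it is indispensable.
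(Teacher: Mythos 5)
Your proposal is correct and follows essentially the same route as the paper: a lower injectivity bound from $\inf\Lambda>0$ plus Mahler's criterion gives Chabauty/geometric convergence, the diameter bound on the convex cores forces the $\Gamma_i$ to become isomorphic to a fixed group via the almost-isometric comparison maps, and then Lemma \ref{determining} together with the discreteness of $\Lambda$ makes the marked lengths on a finite determining set eventually constant, yielding the contradiction. Your explicit flagging of the Zariski density hypothesis (needed to invoke Lemma \ref{determining}) is a legitimate point that the paper leaves implicit, and your citation of stability of geometric limits replaces the paper's more hands-on construction of the isomorphisms (surjectivity via loops in the convex core, injectivity via Gauss--Bonnet), but these are presentational differences, not a different argument.
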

\begin{pf}One can find a neighborhood $U$ of $e$ so that for any
$\gamma \in U$, $l(\gamma)< r_0$ where $r_0$ is the smallest number
in $\Lambda$. Suppose there exist infinitely many non-isometric
convex cocompact manifolds $\Gamma_i \backslash G / K$ with its
length spectrum contained in $\Lambda$. Since $\Gamma_i\in \cal
U_G$, one can find $\Gamma$ so that $\Gamma_i\ra\Gamma$ in Chabauty
topology after passing to a subsequence. In a geometric term
\cite{BP}, it implies that, for $x_i\in C(X/\Gamma_i)\subset B(x_i,R)$,
$$(X/\Gamma_i,x_i)\ra (X/\Gamma,x)$$ in Gromov sense, i.e., for any ball
$B(x,r)\subset X/\Gamma$, there is a smooth embedding $f_i:B(x,r)\ra
X/\Gamma_i$ for large $i$ so that $f_i(x)=x_i$ and $f_i$ tends to an
isometry in $C^\infty$ topology. Since $C(X/\Gamma_i)\subset B(x_i,R)$, for given small $\epsilon>0$,  there exists $i_0$ such that  for all $i>i_0$,
$C(X/\Gamma_i)\subset f_i(B(x,R+\epsilon))$. In negatively curved Hadamard
manifolds, the convex core is a strong deformation retract of a
convex cocompact manifold via shortest distance retraction to the
convex core.

 Then using $f_j\circ
f_i^{-1}:f_i(B(x,R+\epsilon))\ra f_j(B(x,R+\epsilon))$ one can see that
$C(X/\Gamma_i)$ are all homotopy equivalent to each other and so
$\Gamma_i$ are all isomorphic to each other. Choose an isomorphism
$\rho_i:\Gamma' \ra \Gamma_i$ induced from the map $f_i:B(x,R+\epsilon)\ra
X/\Gamma_i$, which induces a representation $\rho_i:\Gamma'\ra G$,
where $\Gamma'=\pi_1(x,B(x,R+\epsilon))$ is isomorphic to a subgroup of $\Gamma$.
The surjectivity of $\rho_i$ is evident since any element in $\Gamma_i$ is represented by a loop based at $x_i$, which is contained in $C(X/\Gamma_i)$.
Then the loop is realized by the image of some loop in $B(x,R+\epsilon)$ based at $x$ under $f_i$.  The injectivity of $\rho_i$ follows from Gauss-Bonnet theorem $$\int_D K dA+\int_{\partial D} k_g ds= 2\pi \chi(D)=2\pi,$$ where $K$ is Gaussin curvature of a disc $D$ and $k_g$ is a geodesic curvature of $\partial D$.
Suppose $f_i(\gamma)$ represents a trivial element in $\pi_1(x_i, C(X/\Gamma_i))$ for some geodesic loop $\gamma$ based at $x$, representing a non-trivial element in $\pi_1(x, B(x,R+\epsilon))$. Choose disc $D_i$ bounding $f_i(\gamma)$, whose Gaussian curvature $<-\delta$.
Then for large $i>i_0$, since $f_i$ is  almost an isometry, $f_i(\gamma)$ is  almost a geodesic, hence $k_g$ along $\partial D_i$ is almost zero with
the interior angle at $x_i$ is $\theta_i$.
Hence the formula reads
$$-\delta Area(D_i) > 2\pi - (\pi- \theta_i)= \pi+\theta_i,$$ which gives
$Area(D_i) < \frac{\pi+\theta_i}{-\delta}<0$, a contradiction.

 Since $f_i$
is almost an isometry, it follows that $l_{\rho_i}(\gamma),\gamma\in
\Gamma'$ are bounded above. Since $\Lambda$ is a spectrum of a
convex cocompact manifold, it is discrete. Then by passing to a
subsequence, one can assume that $l_{\rho_i}(\gamma)$ is constant
for a fixed $\gamma$. Fix a finite determining set
$\{\gamma_1,\cdots,\gamma_k\}$ from Lemma \ref{determining}. After
passing to a subsequence we may assume that
$l_{\rho_i}(\gamma_j)=l_{\rho_l}(\gamma_j),j=1,\cdots,k$ and for all
$i,l$. But then $X/\Gamma_i$ are all isometric to each other, which
contradicts  the choice of $\Gamma_i$.
\end{pf}
Note that a priori we did not assume that $\Gamma_i$ are isomorphic
each other. That $\Gamma_i$ are isomorphic each other follows during
the course of the proof. Also we did not use the multiplicity in the
spectrum. In fact, the proof works even we just assume that the
length spectrum without multiplicity is contained in a fixed
discrete set, not necessarily the same. The issue for bounded
diameter of convex core can be dealt with in real hyperbolic
manifold as follows.

\begin{co}\label{incompressible}Let $M$ be a hyperbolic 3-manifold with
incompressible boundary and with a discrete length spectrum. Then
there are finitely many convex cocompact hyperbolic 3-manifolds, up
to isometry, homotopy equivalent to $M$ with the length spectrum
$\Lambda(M)$ with multiplicity.
\end{co}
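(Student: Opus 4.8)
The plan is to deduce Corollary \ref{incompressible} from Proposition \ref{rank1} by showing that the diameter of the convex core is automatically bounded for the family of manifolds under consideration. So suppose for contradiction that there is an infinite sequence $N_i = H^3/\Gamma_i$ of convex cocompact hyperbolic $3$-manifolds, all homotopy equivalent to $M$, all with length spectrum (with multiplicity) equal to $\Lambda(M)$, and with $\mathrm{diam}(C(\Gamma_i)) \to \infty$. Since each $N_i$ is homotopy equivalent to $M$ and $M$ has incompressible boundary, there are no meridians of short length — in fact no meridians at all — so Theorem \ref{B-C} applies with a uniform constant $K$, and the nearest point retraction $r_i : \Omega_{\Gamma_i}/\Gamma_i \to \partial C(\Gamma_i)$ together with its homotopy inverse are $K$-Lipschitz. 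Consequently $\mathrm{Vol}(\partial C(\Gamma_i))$ is comparable, up to the universal constant $K$, to the area of the conformal boundary; but the conformal boundary is a hyperbolic surface whose topological type is fixed (it is determined by $M$), so by Gauss--Bonnet its area equals $2\pi|\chi(\partial C(N_i))| = 2\pi|\chi(\partial \overline M)|$, a fixed constant. Hence $\mathrm{Vol}(\partial C(\Gamma_i))$ is uniformly bounded above.

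Next I would use Theorem \ref{volume}: $D_i(2-D_i) \leq \lambda_0(\Gamma_i) \leq C\,|\chi(\partial C(N_i))|/\mathrm{vol}(C(\Gamma_i))$, where $D_i$ is the Hausdorff dimension of the limit set $L_{\Gamma_i}$. Since the length spectrum is fixed, $D_i$ is determined by $\Lambda$ via the exponential growth rate of lengths, so $D_i \equiv D$ is constant. If $D > 0$ this forces $\mathrm{vol}(C(\Gamma_i))$ to be bounded above by $C|\chi|/(D(2-D))$, a fixed constant. Combining a uniform upper bound on $\mathrm{vol}(C(\Gamma_i))$ with a uniform lower bound on the injectivity radius — which holds because the shortest geodesic length is the minimum of $\Lambda$, and with incompressible boundary there is a uniform lower bound on injectivity radius on all of $C(\Gamma_i)$ (no thin tubes or short meridians) — the convex cores $C(\Gamma_i)$ have uniformly bounded diameter. (The case $D = 0$ cannot occur: a convex cocompact Kleinian group with incompressible boundary homotopy equivalent to a nontrivial $M$ is nonelementary, hence $D > 0$.) With a uniform diameter bound $R$ in hand, Proposition \ref{rank1} applied to $G = \mathrm{Iso}(H^3)$ immediately gives that only finitely many isometry classes occur with length spectrum contained in the fixed discrete set $\Lambda$, contradicting the assumption that the $N_i$ are pairwise non-isometric.

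I should be careful that the homotopy-equivalence-to-$M$ hypothesis genuinely rules out short meridians: since $M$ has incompressible boundary and the $N_i$ are homotopy equivalent to $M$, each $N_i$ has incompressible boundary as well, so there are no meridians and Theorem \ref{B-C} (equivalently the classical Sullivan bound) applies with a constant independent of $i$. I also want the passage "uniformly bounded volume and injectivity radius implies uniformly bounded diameter" to be clean: one covers $C(\Gamma_i)$ by disjoint embedded $\epsilon_0$-balls (with $\epsilon_0$ the uniform injectivity lower bound), each of definite volume, so their number is bounded, and in a manifold with bounded geometry a chain of overlapping such balls of bounded cardinality has bounded diameter — here I use that $C(\Gamma_i)$ is connected and convex so any two points are joined by a geodesic staying inside, along which one lays down the balls.

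The main obstacle is establishing the uniform diameter bound; everything after that is a direct citation of Proposition \ref{rank1}. Within that, the delicate point is extracting a bound on $\mathrm{vol}(C(\Gamma_i))$ rather than merely on $\mathrm{vol}(\partial C(\Gamma_i))$: this is exactly where Theorem \ref{volume} is needed, turning the fixed value of $D$ (a consequence of the fixed length spectrum) and the bounded boundary volume into a bounded interior volume via the sandwich inequality $D(2-D) \le \lambda_0 \le C|\chi|/\mathrm{vol}(C(N_i))$. One subtlety worth a sentence: $\lambda_0$ depends only on the length spectrum through $D$ only via the \emph{lower} bound $D(2-D)$, which is all we use, so no further rigidity of $\lambda_0$ is required.
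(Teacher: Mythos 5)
Your argument follows the paper's proof essentially verbatim: fixed length spectrum determines the Hausdorff dimension $D$, Theorem \ref{volume} then bounds $\mathrm{vol}(C(N_i))$, incompressibility gives a uniform lower injectivity radius bound on the convex core, these two yield a uniform diameter bound, and Proposition \ref{rank1} finishes. The only difference is your detour through Theorem \ref{B-C} to bound $\mathrm{Vol}(\partial C(\Gamma_i))$, which is superfluous since Theorem \ref{volume} is already stated with the topological quantity $|\chi(\partial C(N))|$ in the numerator.
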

\begin{pf}For convex cocompact manifold, $\Lambda(M)$ determines
the critical exponent of Poincar\'e series which is the Hausdorff
dimension of the limit set, $D$ as observed in section \ref{pre}.
Then Theorem \ref{volume} implies that the volume of the convex core
is uniformly bounded above. Also for boundary incompressible
manifold, the injectivity radius on convex core is uniformly bounded
below by the smallest number in $\Lambda(M)$. So the diameter of the
convex core has a uniform upper bound. Then the conclusion follows
from Proposition \ref{rank1}.
\end{pf}
In the next section, we will prove a theorem {\bf without
multiplicity} condition using Kleinian group theory.
\section{Hyperbolic 3-manifold with Incompressible boundary}
Let $M$ be a fixed hyperbolic 3-manifold. Then $AH(M)$ denotes the
set of marked hyperbolic 3-manifolds homotopy equivalent to $M$.
This is the set of discrete faithful representations from $\pi_1(M)$
to $PSL(2,\bc)$ up to conjugacy. In this reason one denotes it by
$AH(\pi_1(M))$. Hence an element in $AH(M)$ can be written as
$N=(f,M,N)$ with
$$f:M\ra N$$ where $f$ is a marking and $N$ is a hyperbolic 3-manifold homotopy
equivalent to $M$.

From now on, we describe some convergent theorem for hyperbolic 3-manifolds.
 Indeed, from \cite{Canary1}, one can extract an algebraically convergent subsequence if a sequence $N_i$
has a lower bound for injectivity radius. For the readers who are not familiar
with 3-dimensional hyperbolic geometry, we give some ideas.

The window $W$ of $M$ consists of the $I$-bundle components of the
characteristic submanifold $\Sigma(M)$ together with a thickened
neighborhood of every essential annulus in $\partial
\Sigma(M)\setminus \partial M$ which is not the boundary of an
$I$-bundle component of $\Sigma(M)$.  The window itself is an
$I$-bundle over a surface $w$, which is called the window base.

Thurston \cite{thur} showed that if $\Gamma$ is any subgroup of
$\pi_1(M)$ which is conjugate to the fundamental group of a
component of $M\setminus W$ whose closure is not a thickened torus,
then a restriction of any sequence in $AH(\pi_1(M))$ to $AH(\Gamma)$
has a convergent subsequence. Using this they proved the following
\cite{Canary1}:
\begin{thm}\label{ca}Let $\{\rho_i\}$ be a sequence in $AH(\pi_1(M))$. We may
then find a subsequence $\{\rho_j\}$, a sequence of elements
$\{\phi_j\}$ of $Out(\pi_1(M))$, and a collection $x$ of disjoint,
non-parallel, homotopically non-trivial simple closed curves in the
window base $w$ such that if $\Gamma$ is any subgroup of $\pi_1(M)$
which is conjugate to the fundamental group of a component of
$M\setminus X$ whose closure is not a thickened torus, where $X$ is
the total space of the $I$-bundle over $x$, then
$\{\rho_j\circ\phi_j|_\Gamma\}$ converges in $AH(\Gamma)$. Moreover,
if $c$ is a curve in $x$, then $\{l_{\rho_j\circ\phi_j}(c)\}$
converges to $0$.

Specially if $\rho_i\in AH(\pi_1(M))$ is a sequence with
lower bound on injectivity radius, then there exists a subsequence
$\rho_j$ and $\phi_j\in Out(\pi_1(M))$ so that $\rho_j\circ \phi_j$
converges algebraically.
\end{thm}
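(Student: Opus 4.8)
\medskip
\noindent\textit{Proof strategy.}
The plan is to treat the first assertion as a black box: it is exactly the statement extracted in \cite{Canary1}, which rests on Thurston's compactness theorem \cite{thur} for the pieces of $M\setminus W$ (no new parabolics can be created there) together with Canary's covering and limiting arguments. I would not reprove it. The content to be supplied is the ``Specially'' clause, and I would deduce it from the first assertion by a short length-spectrum argument.

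First I would make the hypothesis explicit: a uniform lower bound on the injectivity radius of the sequence means there is $r_0>0$ with $l_{\rho_i}(g)\geq 2r_0$ for every $i$ and every nontrivial $g\in\pi_1(M)$, since the core geodesic of a Margulis tube has injectivity radius at most half its length. Apply the first assertion to obtain a subsequence $\{\rho_j\}$, elements $\phi_j\in Out(\pi_1(M))$, and the collection $x$ of disjoint homotopically nontrivial simple closed curves in the window base $w$, with $l_{\rho_j\circ\phi_j}(c)\to 0$ for each $c\in x$. Since $\phi_j$ is an automorphism, $\rho_j\circ\phi_j$ is merely a remarking of $\rho_j$ and $l_{\rho_j\circ\phi_j}(c)=l_{\rho_j}(\phi_j(c))$. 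If $x$ were nonempty, fix $c\in x$; because the window $W$ is $\pi_1$-injective in $M$ (it is assembled from the characteristic submanifold and essential annuli), the curve $c$, being essential in the surface $w$, represents a nontrivial element of $\pi_1(M)$, and hence so does $\phi_j(c)$. Then $l_{\rho_j\circ\phi_j}(c)=l_{\rho_j}(\phi_j(c))\geq 2r_0>0$ for all $j$, contradicting $l_{\rho_j\circ\phi_j}(c)\to 0$. Therefore $x=\emptyset$, so the total space $X$ of the $I$-bundle over $x$ is empty, $M\setminus X=M$ is connected, and (we may assume) its closure is not a thickened torus --- otherwise $\pi_1(M)\cong\bz^2$ and the hypothesis is vacuous, as such a group admits no representation in $AH$ with injectivity radius bounded below. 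Taking $\Gamma=\pi_1(M)$ in the first assertion then says precisely that $\rho_j\circ\phi_j=\rho_j\circ\phi_j|_{\pi_1(M)}$ converges in $AH(\pi_1(M))$, i.e.\ algebraically, which is the claim.

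The only real obstacle is the first assertion itself --- the window decomposition together with the statement that, after suitable remarkings, $\{\rho_j\circ\phi_j\}$ converges on every complementary piece whose closure is not a thickened torus while all the degenerating curves are confined to the window base; this is deep $3$-manifold topology and is what \cite{thur} and \cite{Canary1} provide. Granting it, the deduction above is elementary, modulo the harmless verifications that $M$ is connected, that $M$ is not a thickened torus, and that the curves of $x$ are essential in $M$, all of which hold under the standing hypotheses.
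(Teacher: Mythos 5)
Your proposal is correct and takes essentially the same route as the paper: the paper likewise treats the first assertion as a citation to \cite{Canary1} (offering only a sketch via Thurston's compactness for the complementary pieces of the window, pleated surface limits, and regluing), and leaves the ``Specially'' clause as the implicit deduction you spell out. Your explicit argument --- a lower bound on injectivity radius forces $l_{\rho_j\circ\phi_j}(c)\geq 2r_0$ for every essential $c$ in the window base, hence $x=\emptyset$ and one may take $\Gamma=\pi_1(M)$ --- is exactly the intended reduction, carried out with appropriate care about $\pi_1$-injectivity of the window and the thickened-torus exclusion.
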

The idea is as follows. By Thurston, the restrictions of the
representations to the complementary components of the window have
convergent subsequences, and the lengths of the window boundaries
are bounded. For each $\rho_i$, represent $w$ as a pleated surface
with geodesic boundary. After passing to a subsequence and
remarkings, either the hyperbolic structures induced by pleated
surfaces converge or develop cusps. In the latter case we cut along
the curves which become cusps (these are the family $x$), and argue
that the representations restricted to the remaining components
converge up to subsequence. Finally, one reglues along the window
boundaries which did not converge to cusps, and after passing to
further subsequence obtain such a convergent subsequence.

Now we prove the following theorem.
\begin{thm}\label{incompressible-bis} Let $M$
be a hyperbolic 3-manifold with incompressible boundary and
$\Lambda$ a discrete subset of $\mathbb R$. Then there are finitely
many hyperbolic 3-manifolds, up to isometry, homotopy equivalent to
$M$ so that the length spectrum {\bf without multiplicity} is
contained in $\Lambda$.
\end{thm}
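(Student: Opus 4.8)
The plan is to argue by contradiction, exactly as in Proposition~\ref{rank1} and Corollary~\ref{incompressible}, but replacing the crude ``bounded diameter of the convex core'' input by the algebraic convergence machinery of Theorem~\ref{ca}. Suppose there were infinitely many non-isometric hyperbolic $3$-manifolds $N_i$, homotopy equivalent to $M$, with $\Lambda(N_i)$ (without multiplicity) contained in the fixed discrete set $\Lambda$. Write each $N_i$ as a marked manifold $\rho_i\in AH(\pi_1(M))$. The first step is to produce a uniform lower bound on the injectivity radius of the $N_i$: let $r_0>0$ be the smallest element of $\Lambda$; since $M$ has incompressible boundary, there is no handle-type degeneration to worry about, and any closed geodesic in $N_i$ has length in $\Lambda\subset[r_0,\infty)$, so the shortest geodesic has length $\ge r_0$, giving $\operatorname{inj}(N_i)\ge r_0/2$ on the convex core. (One must be slightly careful here: the bound on $\operatorname{inj}$ in the thick part is immediate; there is no thin part precisely because there are no short geodesics and, by incompressibility plus Lemma~\ref{meridian}/Theorem~\ref{B-C}, no short meridians either.)

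With this lower bound in hand, the second step invokes the ``Specially'' clause of Theorem~\ref{ca}: after passing to a subsequence there are $\phi_j\in\operatorname{Out}(\pi_1(M))$ so that $\rho_j\circ\phi_j$ converges algebraically in $AH(\pi_1(M))$ to a limit representation $\rho_\infty$. The manifolds $N_j=\rho_j(\pi_1(M))\backslash H^3$ are isometric to $(\rho_j\circ\phi_j)(\pi_1(M))\backslash H^3$, since an outer automorphism only changes the marking, not the isometry type; so we may as well assume $\rho_j\to\rho_\infty$ algebraically, with each $N_j$ still non-isometric to the others and still having length spectrum (without multiplicity) in $\Lambda$. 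Algebraic convergence means that for each fixed $\gamma\in\pi_1(M)$ the translation lengths $l_{\rho_j}(\gamma)$ converge to $l_{\rho_\infty}(\gamma)$.

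The third step extracts a contradiction via discreteness of $\Lambda$ and the finite determining set of Lemma~\ref{determining}. Fix the finite generating set $\{\gamma_1,\dots,\gamma_k\}$ provided by Lemma~\ref{determining} (note $\rho_\infty$, and hence each $\rho_j$ for $j$ large, is Zariski dense because it is a nonelementary Kleinian group, so the lemma applies). For each $m$, $l_{\rho_j}(\gamma_m)\in\Lambda$ and $l_{\rho_j}(\gamma_m)\to l_{\rho_\infty}(\gamma_m)$; since $\Lambda$ is discrete, the sequence $l_{\rho_j}(\gamma_m)$ is eventually constant. Passing to a further subsequence we may assume $l_{\rho_j}(\gamma_m)=l_{\rho_l}(\gamma_m)$ for all $j,l$ and all $m=1,\dots,k$. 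By Lemma~\ref{determining} the representations $\rho_j$ are then all conjugate in $PSL(2,\bc)$, i.e.\ the manifolds $N_j$ are all isometric, contradicting the assumption that they were pairwise non-isometric.

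I expect the main obstacle to be the first step: justifying that a discrete length spectrum in $\Lambda$ really forces a uniform lower bound on the injectivity radius of the convex core. Short geodesics are ruled out directly, but one must also rule out a thin part coming from a short curve on the boundary, and this is where incompressibility is essential — via Sullivan's $K$-Lipschitz nearest-point retraction (Theorem~\ref{B-C}, with no meridians to worry about since the boundary is incompressible), a short curve on the conformal boundary would produce a short geodesic in $N_j$, contradicting discreteness of the spectrum. A secondary point requiring care is confirming Zariski density (equivalently non-elementariness) of the limit $\rho_\infty$ so that Lemma~\ref{determining} is legitimately applicable; this follows since $\pi_1(M)$ is not virtually abelian (as $M$ is not a solid torus and has incompressible boundary) and algebraic limits of discrete faithful representations are discrete and faithful, hence non-elementary.
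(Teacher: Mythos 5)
Your proposal is correct and follows essentially the same route as the paper: the paper's proof likewise applies Theorem~\ref{ca} to extract an algebraically convergent subsequence after remarking, then uses discreteness of $\Lambda$ together with the finite determining set of Lemma~\ref{determining} to force the $\rho_j$ to be conjugate. Your first step merely makes explicit the injectivity-radius (equivalently, no-short-geodesics) justification that the paper leaves implicit when it invokes Theorem~\ref{ca}.
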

\begin{pf}
Let $N_i=H^3/\rho_i(\pi_1(M))$ be  infinitely many non-isometric
such hyperbolic manifolds whose length spectrum without multiplicity
is contained in $\Lambda$.

Suppose after passing to a subsequence, we can obtain a convergent
sequence $\{\rho_i\circ (\phi_i)_*\}$ converging to $\rho$ up to
changing the marking by $\phi_i$, which is guaranteed by Theorem
\ref{ca}.

We have
$$l_{\rho_i\circ (\phi_i)_*}(\gamma)\ra l_\rho(\gamma).$$
Since $\{l_{\rho_i\circ (\phi_i)_*}(\gamma)\subset \Lambda\}$ is
discrete, after passing to a subsequence, we conclude that
$l_{\rho_i\circ (\phi_i)_*}(\gamma)=l_\rho(\gamma)$ for a fixed
$\gamma$. Then one can repeat this for a finite set of
$\gamma_1,\cdots,\gamma_n$ in $\pi_1(M)$. By Lemma
\ref{determining} or by \cite{Kim2}, a representation is determined up to conjugacy by
the translation length of a finitely many elements in $\pi_1(M)$. So
we conclude that $\rho_i \circ (\phi_i)_*$ represent isometric
manifolds, which is a contradiction.
\end{pf}

For future purpose, we record the following theorem as well
\cite{Mor}.
\begin{thm}\label{convergence}Let $\rho_i:\Gamma \ra
PSL(2,\bc)$ be a sequence of representations such that
$l_{\rho_i}(\gamma)$ is uniformly bounded for each
$\gamma\in\Gamma$. Then there is a convergent subsequence.
\end{thm}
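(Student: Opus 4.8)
The statement is Morgan's compactness theorem for $PSL(2,\bc)$-character varieties, and the plan is to derive it from a trace estimate together with the properness of the character variety, the $\br$-tree compactification being the conceptual reason no divergence can occur. Since $\Gamma$ is finitely generated, fix a generating set $\gamma_1,\dots,\gamma_k$. The first step is to convert the bound on translation lengths into a bound on traces: if $A\in PSL(2,\bc)$ has a lift to $SL(2,\bc)$ with eigenvalues $\mu,\mu^{-1}$, $|\mu|\ge 1$, then $l_{H^3}(A)=2\log|\mu|$ when $A$ is loxodromic and $l_{H^3}(A)=0$ otherwise, so in every case $|\mathrm{tr}\,A|=|\mu+\mu^{-1}|\le e^{l_{H^3}(A)/2}+1$ (traces being well defined on $PSL(2,\bc)$ only up to sign, we take absolute values; equivalently one may pass once and for all to the adjoint representation $PSL(2,\bc)\ra SO(3,\bc)$ to avoid lifting issues). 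Hence for each $\gamma\in\Gamma$ there is $C(\gamma)<\infty$ with $|\mathrm{tr}\,\rho_i(\gamma)|\le C(\gamma)$ for all $i$.

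Next I would invoke the Culler--Shalen description of the character variety $X(\Gamma)$ of $PSL(2,\bc)$-representations (again via the adjoint representation, to sidestep lifting obstructions): there is a finite family $w_1,\dots,w_N\in\Gamma$ of words in the $\gamma_j$ so that the trace functions $\mathrm{tr}\,w_1,\dots,\mathrm{tr}\,w_N$ realize $X(\Gamma)$ as a closed affine subset of $\bc^N$; in particular this map is proper. By the trace bound the points $(\mathrm{tr}\,\rho_i(w_1),\dots,\mathrm{tr}\,\rho_i(w_N))$ lie in a fixed compact subset of $\bc^N$, so after passing to a subsequence the characters $\chi_{\rho_i}$ converge in $X(\Gamma)$ to some $\chi_\infty$. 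The reason such a subsequence cannot escape to infinity in $X(\Gamma)$ is exactly Morgan--Shalen: an ideal point of $X(\Gamma)$ carries the projective class of a nonzero length function, realized as $\ell_\infty(\gamma)=\lim_i\epsilon_i\,l_{\rho_i}(\gamma)$ for some $\epsilon_i\to 0$ coming from a small isometric action of $\Gamma$ on an $\br$-tree; since every $l_{\rho_i}(\gamma)$ is bounded, any such limit is identically $0$, contradicting nontriviality of the length function, so no ideal point lies in the closure of $\{\chi_{\rho_i}\}$.

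Finally I would upgrade convergence of characters to genuine convergence of representations up to conjugacy. Choose $\rho_\infty$ with $\chi_{\rho_\infty}=\chi_\infty$. If $\rho_\infty$ is irreducible, its conjugation orbit is closed and the character map is a submersion onto $X(\Gamma)$ near $\rho_\infty$, so one can select conjugating elements $g_i$ with $g_i\rho_i g_i^{-1}\ra\rho_\infty$ along a subsequence. If $\rho_\infty$ is reducible, one argues by hand: after conjugation the matrices $\rho_i(\gamma_j)$ can be put into a normalized form (e.g. with a prescribed approximate common fixed point and a second distinguished point tracked along a suitable word) in which the bounded traces on $w_1,\dots,w_N$ force the matrix entries themselves to be bounded; a diagonal subsequence then converges, and the limit is a (possibly non-semisimple) representation whose semisimplification has character $\chi_\infty$. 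Either way one obtains a conjugated subsequence converging in $\mathrm{Hom}(\Gamma,PSL(2,\bc))$, which is the assertion.

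The main obstacle is this last step. The trace estimate and the properness of the character variety are routine, but passing from a convergent sequence of characters to a convergent sequence of representations requires choosing the conjugating elements carefully: bounded traces only pin down the closed orbit of the semisimplification, so for non-semisimple or reducible limits one must arrange that the off-diagonal parts of the $\rho_i(\gamma_j)$ converge and not merely their traces.
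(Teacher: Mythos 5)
The paper offers no proof of this statement: it is recorded as a quotation of Morgan's compactness theorem with the bare citation \cite{Mor}, so the only comparison available is with the standard argument in that reference, which is exactly the character-variety route you take. Your first two steps are correct and are the standard ones: a loxodromic $A$ of translation length $l$ has a lift with eigenvalues of modulus $e^{\pm l/2}$, whence $|\mathrm{tr}\,A|\le e^{l/2}+1$ (and $\le 2$ in the elliptic or parabolic case), so bounded lengths give bounded traces; and since $\Gamma$ is finitely generated (a hypothesis you correctly supply --- it is implicit in the paper, where the theorem is only applied to groups of the form $\pi_1(M')$), the Culler--Shalen trace functions realize the character variety as a closed affine subset of $\bc^N$, so a subsequence of the characters converges. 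The Morgan--Shalen $\br$-tree discussion is not needed for this --- closedness of the image already forbids escape to infinity --- but it does no harm.

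The gap is exactly where you locate it, and the patch you sketch does not work as written. In the reducible case you assert that, after a suitable normalization, ``the bounded traces on $w_1,\dots,w_N$ force the matrix entries themselves to be bounded''; this is false: a sequence of unipotent upper-triangular matrices has every trace equal to $2$ and arbitrarily large entries, and for a pair of parabolics sharing a fixed point the ratio of their translation parts is a conjugacy invariant that traces do not see at all. So bounded characters never bound entries in the reducible directions, and the passage from convergence in the character variety back to convergence in $\mathrm{Hom}(\Gamma,PSL(2,\bc))$ is a genuine missing step, not a technicality. Two honest ways to close it: (i) in every application in the paper the $\rho_i$ are discrete and faithful representations of a nonelementary group, hence irreducible, and one shows (via J{\o}rgensen's inequality, as in Morgan's treatment) that the limit character is again irreducible, after which your irreducible-limit argument applies verbatim; or (ii) for the general statement, first replace each $\rho_i$ by a conjugate within distance $1/i$ of its semisimplification (conjugating a triangular representation by $\mathrm{diag}(\lambda,\lambda^{-1})$ with $\lambda\to 0$ kills the off-diagonal part without changing the character), and then use that polystable representations with convergent characters converge up to conjugation; a diagonal subsequence then converges. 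With either input your outline becomes a complete proof along the same lines as the cited source; without one of them the final step fails.
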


\section{Hyperbolic 3-manifold whose injectivity radius
is bounded from below on convex core}\label{3-manifold1} In this
section we discard the assumption that $M$ has an incompressible
boundary, nonetheless we stick to the case where manifolds are convex
cocompact.

Now we state the theorem. Note that when $M$ has a compressible
boundary, the smallest geodesic length of $M$ does not determine
the injectivity radius on the convex core due to the compression
disk.
\begin{thm}\label{coreinject}
Suppose $M$ is a convex cocompact hyperbolic 3-manifold.
Then the set of convex cocompact hyperbolic 3-manifolds homotopy
equivalent to $M$ with the length spectrum $\Lambda(M)$ and with
the injectivity radius of the convex core being bounded from
below in the sense of  Definition \ref{inj}, is finite up to isometry.
\end{thm}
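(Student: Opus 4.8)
The plan is to mimic the strategy of Theorem \ref{incompressible-bis}, reducing the unmarked length-spectrum statement to a marked one via an algebraic-convergence argument, but now feeding in the lower bound on the injectivity radius of the convex core as the hypothesis that produces convergence. So I would argue by contradiction: suppose there are infinitely many pairwise non-isometric convex cocompact hyperbolic $3$-manifolds $N_i = H^3/\rho_i(\pi_1(M))$, all homotopy equivalent to $M$, all with length spectrum $\Lambda(M)$, and all with $\mathrm{inj}(C(N_i)) \geq \epsilon_0 > 0$ in the sense of Definition \ref{inj}. First I would observe that $\Lambda(M)$ determines the critical exponent of the Poincaré series, hence the Hausdorff dimension $D$ of the limit set (as recorded in Section \ref{pre}), so by Theorem \ref{volume} the volumes $\mathrm{vol}(C(N_i))$ are uniformly bounded above (note $|\chi(\partial C(N_i))|$ is a homotopy invariant, hence constant along the sequence). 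The key extra step, compared to the incompressible case, is that a \emph{uniform lower bound on the injectivity radius of the convex core together with a uniform upper bound on its volume yields a uniform upper bound on the diameter of the convex core}: the ball-covering in Definition \ref{inj} gives a bound on the number of $\epsilon_0$-balls needed, and disjointness of the $\epsilon_0/2$-balls around the centers plus the volume bound caps that number, so a maximal $\epsilon_0$-separated net in $C(N_i)$ has uniformly bounded cardinality, forcing bounded diameter.

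Once the diameter of $C(N_i)$ is uniformly bounded, I would run the Chabauty/geometric-convergence argument exactly as in the proof of Proposition \ref{rank1}. Choosing a neighborhood $U$ of the identity in $PSL(2,\bc)$ small enough that every nontrivial $\gamma\in U$ has translation length below the smallest element of $\Lambda(M)$, the injectivity-radius lower bound — now upgraded to an honest lower bound on $\mathrm{inj}(x_i)$ at a basepoint $x_i\in C(N_i)$, which follows because the ``small injectivity radius = long thin 1-handle'' phenomenon is exactly what the hypothesis forbids — puts all $\rho_i(\pi_1(M))$ in the compact set $\cal U_{PSL(2,\bc)}$, so after passing to a subsequence $\rho_i(\pi_1(M)) \to \Gamma$ in Chabauty topology, equivalently $(N_i,x_i)\to (H^3/\Gamma, x)$ geometrically. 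Using the bounded diameter of the convex cores, for small $\eta>0$ and large $i$ we get $C(N_i)\subset f_i(B(x,R+\eta))$ with $f_i$ an almost-isometric embedding, and the deformation retraction onto the convex core together with the maps $f_j\circ f_i^{-1}$ shows the $\pi_1(N_i)$ are all isomorphic; the Gauss--Bonnet argument from Proposition \ref{rank1} shows the induced maps $\rho_i:\Gamma'\to PSL(2,\bc)$ are faithful, and surjectivity onto $\rho_i(\pi_1(M))$ follows as there. Then $l_{\rho_i}(\gamma)$ is uniformly bounded for each $\gamma$, hence eventually constant (discreteness of $\Lambda(M)$), and matching translation lengths on the finite determining set of Lemma \ref{determining} forces the $N_i$ to be isometric, contradicting the choice of the sequence.

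I expect the main obstacle to be the first step: carefully extracting from Definition \ref{inj}, with the specific ``$x_i$-balls inside $A$ whose union is homotopy equivalent to $A$, plus $y_j$-balls to cover'' formulation, both (a) a genuine pointwise lower bound $\mathrm{inj}(x_i)\geq c(\epsilon_0)$ at some basepoint in the convex core — so that the geometric convergence machinery of \cite{Mcc, BP} applies at all — and (b) the volume-to-diameter conversion, which requires knowing that the covering $\epsilon_0$-balls, even the ones not contained in $C(N_i)$, still contribute controlled volume to a fixed neighborhood of $C(N_i)$, or alternatively that one can connect any two points of $C(N_i)$ through a controlled chain of the homotopy-essential $x_i$-balls. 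A subtler point is that $C(N_i)$ is a manifold with boundary while the convergence statements are cleanest for manifolds without boundary; I would handle this by working in the ambient $N_i$ (which has no boundary) and only using $C(N_i)$ to locate basepoints and control diameters, exactly as the preliminaries suggest. Everything after the bounded-diameter reduction is then a routine rerun of the arguments already given for Proposition \ref{rank1} and Theorem \ref{incompressible-bis}.
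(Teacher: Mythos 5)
Your proposal is correct and follows essentially the same route as the paper: bound the convex core volume via Theorem \ref{volume}, convert the injectivity radius lower bound plus the volume bound into a uniform diameter bound using the ball-covering of Definition \ref{inj}, extract a geometric limit at a basepoint in the convex core, and then rerun the Proposition \ref{rank1}/Theorem \ref{incompressible-bis} argument with a finite determining set and discreteness of $\Lambda(M)$. The caveats you flag at the end (pointwise injectivity radius at the basepoint, the volume-to-diameter conversion, working in the ambient boundaryless manifold) are exactly the points the paper also treats, in the same informal spirit.
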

\begin{pf}Suppose there are infinitely
many non-isometric convex cocompact hyperbolic 3-manifolds $N_i$
 with $\Lambda(M)$. We will derive a
contradiction as before.
The unmarked length spectrum determines a critical exponent of
Poincar\'e series which is equal to $D$ in convex cocompact case.
Then by Theorem \ref{volume}, the volume of the convex core has an
upper bound. Since the injectivity radius of the convex core is
bounded from below, its diameter has an upper bound. This can be easily seen as follows.  By the defition of injectivity radius, $\cup B_r(x_i)  \cup B_r(y_j)$ covers $C(N_i)$ with the condition  $d(x_i, x_j)>r, d(x_i, y_j)>r, d(y_i, y_j)>r$.  Set $\text{vol}(B_r(x))=\epsilon$. Since $x_i$ and $y_j$ are $r$-separated one can easily estimate
$$\text{ vol}(\cup B_r(x_i)  \cup B_r(y_j))\geq f(\text {number of balls}) \epsilon$$ where $f(\text {number of balls})\ra \infty$ as the number of
balls goes to infinity.  Hence the number of balls to cover $C(N_i)$ is uniformly bounded and hence the diameter is uniformly bounded.

Take a base point $p_i \in C(N_i)$. Then the injectivity radius at
$p_i$ is bounded below by the assumption. Then the sequence
$(N_i,p_i)$ converge to $(N,p)$ in geometric topology where $N$ is
a complete hyperbolic 3-manifold, \cite{Mcc}. Take a number $R$
which is greater than the diameter of $C(N_i)$ for all $i$.  By
the definition of geometric convergence, there exist a ball
$B(p,R)\subset N$ and a smooth embedding $f_i:B(p,R)\ra N_i$ whose
quasiisometric constant converges to 1 as $i$ tends to $\infty$,
and so $f_i(B(p,R))$ contains $C(N_i)$ and $B(p,R)$ is
homeomorphic to a neighborhood of $C(N_i)$ for all large $i$. This can be easily seen as follows. As in the proof of Proposition \ref{rank1}, one can show
$(f_i)_*:\pi_1(p, B(p,R))\ra \pi_1(p_i, f_i(B(p,R)))$ is an isomorphism. And since $C(N_i)\subset f_i(B(p,R))$, the nearest point retraction of $f_i(B(p,R))$ to $C(N_i)$ induces a homotopy equivalence between $B(p,R)$ and $C(N_i)$. Since a neighborhood of $C(N_i)$ is homeomorphic to $N_i$,   we conclude that $N_i$ are
all homeomorphic to $B(p,R)$. Note that $B(p,R)$ is homotopy
equivalent to $M$.

Now we can argue as in the proof of Proposition \ref{rank1}. Let $\gamma_1,\cdots,\gamma_k\in \pi_1(M)$ be
a determining set, i.e., the marked length spectrum of these
elements determines the representation up to conjugacy.

Now give a marking $f_i:B(p,R)\ra N_i$ to $N_i$, which will induce
a representation $\rho_i:\pi_1(M)\ra PSL(2,\bc)$ corresponding to
$N_i$. Then $l_{N_i}(\gamma_1)\ra l_N(\gamma_1)$. Since
$\Lambda(M)$ is discrete, after passing to a subsequence, we may
assume that $l_{N_i}(\gamma_1)=l_N(\gamma_1)$. Doing this
$k$-times, we get $l_{N_i}(\gamma_j)=l_N(\gamma_j)$ for all $i$
and $j=1,\cdots,k$. This shows that all $N_i$ are isometric to each
other, which is a contradiction.
\end{pf}

\section{Hyperbolic 3-manifold without free factors}\label{3-manifold2}
In this section we prove our main result.
Let $M$ be a convex cocompact hyperbolic 3-manifold with
compressible boundary which does not have a handlebody factor, i.e.,  it has
incompressible cores $M_i$ such that a unique 1-handle $h^{i}$ connects $M_i$ to $M_{i+1}$. In terms of the fundamental group, $\pi_1(M)$ does not have a free factor $F_n$ where $F_n$ is a free group with $n$ generators, see section \ref{pre}.

\begin{thm}\label{nonhandle}Let $M$ be a  convex cocompact real hyperbolic 3-manifold (which is not a solid torus)
with a length spectrum $\Lambda$ with multiplicity. Suppose $\pi_1(M)$ does not have any free factor, i.e., $\pi_1(M)$ cannot be represented as $G* F_n$ where $F_n,\ n\geq 1$ is a free group of $n$-generators. Then there exists only finitely many non-isometric
convex cocompact hyperbolic 3-manifolds which are homotopy
equivalent to $M$ and with the length spectrum $\Lambda(M)$ with
multiplicity.
\end{thm}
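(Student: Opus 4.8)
The plan is to argue by contradiction, running the same scheme as in the incompressible case but organized along the incompressible-core decomposition of $M$. Suppose there are infinitely many non-isometric convex cocompact hyperbolic 3-manifolds $N_i$, homotopy equivalent to $M$, all with length spectrum $\Lambda$ counted with multiplicity. Write $M$ as the union of incompressible cores $M_1,\dots,M_n$, where a single 1-handle $h^i$ joins $M_i$ to $M_{i+1}$; correspondingly $N_i$ has covering submanifolds $N_i^j$ with $\pi_1(N_i^j)$ conjugate to $\pi_1(M_j)$. Each $N_i^j$ is a convex cocompact hyperbolic 3-manifold with incompressible boundary, and its length spectrum without multiplicity is contained in $\Lambda$ (lengths of geodesics in $N_i^j$ are lengths of geodesics in $N_i$). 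By Theorem~\ref{incompressible-bis}, for each fixed $j$ there are only finitely many isometry types among the $N_i^j$, so after passing to a subsequence we may assume that for each $j$ the manifolds $N_i^j$ are all isometric to a fixed $N^j$, via markings compatible with the gluing pattern.

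Next I would split into two cases according to the behavior of the 1-handles. First suppose the geodesic lengths of the curves dual to the compression disks of the handles $h^i$ (equivalently the ``lengths of the 1-handles'') stay bounded for all $i$. Then, combined with the fixed isometry types of the pieces $N^j$, the injectivity radius of the convex core $C(N_i)$ is bounded below in the sense of Definition~\ref{inj}: the only way the injectivity radius can degenerate for a manifold without free factor is a long thin compression handle, and here those are excluded by the length bound; inside each $N^j$ the injectivity radius is bounded below because the $N^j$ form a finite set. Then Theorem~\ref{coreinject} applies directly and yields only finitely many isometry types, a contradiction. So it remains to treat the case where, after passing to a subsequence, the length of some handle $h^{i_0}$ tends to infinity.

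The heart of the argument is this divergent case, and it is where the multiplicity hypothesis is essential. The idea is that pushing a compression handle to be long and thin forces the appearance of a short meridian on the conformal boundary, and by Lemma~\ref{meridian} and Theorem~\ref{B-C} this in turn forces the appearance of arbitrarily long closed geodesics transverse to the compression disk; but these long geodesics are produced ``in families'' by composing with the deck transformations coming from the two incompressible pieces the handle joins, so the multiplicity of some length in $\Lambda(N_i)$ grows without bound, whereas $\Lambda(M)$ has fixed finite multiplicities --- contradiction. Concretely: if $l_{N_i}(h^{i_0})\to\infty$ then (using Lemma~\ref{meridian} in the contrapositive direction, via the relation between convex-core and conformal-boundary geometry in \cite{BC}) the meridian around $h^{i_0}$ has conformal length $\to 0$, hence by Sugawa's estimate a large-modulus annulus in $\Omega/\Gamma$, hence a geodesic $\gamma$ crossing the disk with $l_{N_i}(\gamma)\to\infty$. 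One then exhibits, for each such $N_i$, a set of distinct conjugacy classes $\{w\gamma w' : w\in F, w'\in F'\}$ with $F,F'$ finite subsets of $\pi_1(M_{i_0}),\pi_1(M_{i_0+1})$ of size growing with $l_{N_i}(h^{i_0})$, all of which have the same geodesic length (or lengths varying in a controlled small window that still collides for a fixed length value in $\Lambda$, using discreteness of $\Lambda$), forcing the multiplicity of that length to exceed the multiplicity in $\Lambda(M)$. This is the step I expect to be the main obstacle: making precise the claim that a long thin handle produces a collection of geodesics of \emph{exactly equal} (or forced-equal by discreteness) length whose cardinality is unbounded, and checking carefully that these conjugacy classes are genuinely distinct in $\pi_1(M)$ and remain distinct geodesics in $N_i$. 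Once the multiplicity contradiction is established in this case, combining it with the bounded-handle case completes the proof.
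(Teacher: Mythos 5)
Your overall architecture matches the paper's: argue by contradiction, pass to the incompressible cores $M_j$, use Theorem \ref{incompressible-bis} on the covers $N_i^j$ to fix their isometry types along a subsequence, and then split on whether the handle lengths stay bounded (in which case Theorem \ref{coreinject} finishes) or some handle degenerates. The bounded case and the mechanism producing long handle-crossing geodesics (Theorem \ref{B-C}, Lemma \ref{meridian}, Sugawa's modulus estimate) are as in the paper; the paper additionally needs Lemma \ref{collect} to regroup pieces joined by handles of bounded length before isolating the first degenerating handle, a step you should not skip in the multi-handle case.

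The genuine gap is in your treatment of the divergent-handle case, and it is exactly the step you flag as the obstacle. Your plan is a \emph{surplus} argument: produce unboundedly many distinct conjugacy classes $w\gamma w'$ of \emph{exactly equal} geodesic length, so that some multiplicity in $\Lambda(N_i)$ blows up. This cannot be made to work as stated: the lengths $l_{N_i}(w\gamma w')$ for distinct $w,w'$ differ by amounts bounded below in terms of the translation lengths of $w,w'$ (hence by the systole of the pieces), so they do not collide, and discreteness of $\Lambda$ does not rescue you because the ``window'' is not small. The paper's argument is the opposite, a \emph{deficit} argument, and it closes the gap cheaply: fix a geodesic $\gamma$ in $N_1$ that is not carried by any piece $P_j(C(N_1^j))$ (such exists since $\pi_1(M)$ is a nontrivial free product, so some element is not conjugate into any factor), and set $L=l_{N_1}(\gamma)$. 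For $i$ large, every geodesic of $N_i$ crossing a degenerating handle has length $>L$, while the geodesics of $N_i$ carried by the pieces have exactly the same lengths as those of $N_1$'s pieces (the covers being isometric). Hence the multiplicity of $L$ in $\Lambda(N_i)$ is strictly smaller than in $\Lambda(N_1)$ --- possibly zero --- contradicting equality of the spectra with multiplicity. Replace your families-of-equal-length step with this comparison and the proof goes through.
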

\begin{pf}Suppose  there is an infinite sequence of mutually non-isometric
convex cocompact hyperbolic 3-manifolds $N_i=H^3/\Gamma_i$ with
length spectrum $\Lambda(M)$. We will first deal with the case where
there is only one 1-handle to give a better understanding to the
reader. First assume that the incompressible core of $M$ is $M_1 \cup
M_2$ and $M$ is obtained from this by adding an 1-handle $h$ glued
to boundaries of $M_1$ and $M_2$ ($M_1=M_2$ is not allowed).
\begin{figure}
$$\includegraphics[width=.85\linewidth]{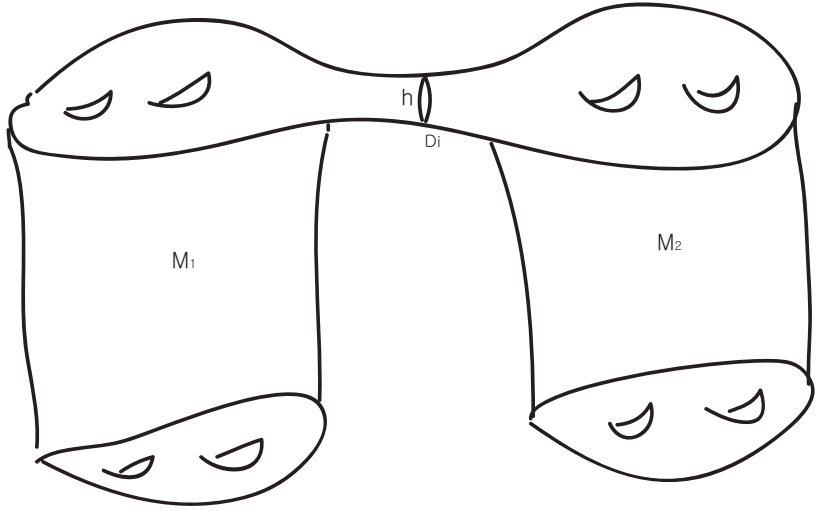}$$
\caption{$M$ consists of $M_1$, $M_2$ and one handle $h$.}
\end{figure}
Then $\pi_1(M)=\pi_1(M_1)*\pi_1(M_2)$. Accordingly
$\Gamma_i=\Gamma_i^1 * \Gamma_i^2$. Let $N_i^j$ be a cover of $N_i$
corresponding to $\pi_1(M_j), j=1,2$. Since $M_1$ has incompressible
boundary, $N_i^1$ has incompressible boundary also. Note that
$N_i^1$ is convex cocompact by covering theorem \cite{Ca2}.
Now we apply  Theorem \ref{incompressible-bis}. Since
$\Lambda(N_i^j) \subset \Lambda(N_i)= \Lambda(M)$,
 after passing to a subsequence, we may assume that all $N_i^1$ are
isometric to each other. The same thing is true for $N_i^2$.

 By using a
covering map $P_j:N_i^j \ra N_i$ (we drop $i$ in $P_j$ for
simplicity), $N_i$ has two parts $P_1(C(N_i^1))$ and $P_2(C(N_i^2))$
homotopy equivalent to $M_1$ and $M_2$ respectively, such that
$N_i^j$ are isometric to each other for all $i$ and fixed $j=1,2$.
 Note that $P_1(C(N_i^1))$ and $P_2(C(N_i^2))$ might  not be disjoint.  Since the convex
hull is the smallest convex set containing all the closed geodesics, it is clear that $P_j(C(N_i^j))\subset C(N_i)$.
  For
each closed geodesic $\gamma \subset N_i^j$, $P_j:\gamma \ra
P_j(\gamma)$ is a homeomorphism of degree 1 viewed as a self map
from a circle to itself since $N_i^j$ is a covering corresponding to
$\pi_1(M_j)$. In particular geodesic lengths coming from
$P_1(C(N_i^1))$ are all the same for all $i$, and the same thing is
true for $P_2(C(N_i^2))$.

Since $C(N_i)$ is homotopy equivalent to $M$, decompose $C(N_i)=C_i^1 \cup h \cup C_i^2$ such that $C_i^1$ contains $P_1(C(N_i^1))$, convex and  homotopy equivalent to $M_1$,
$C_i^2$ contains $P_2(C(N_i^2))$, convex and  homotopy equivalent to $M_2$, and $h$ is a  1-handle joining them. Note here that $C_i^1, h$ and $C_i^2$ are not mutually disjoint in general.
Since $C_i^j$ has irreducible boundary and the length spectrum is fixed for it, the injectivity radius is bounded below on $C_i^j$. Hence by Theorem \ref{volume} again, the diameter of $C_i^j$ is bounded.

If the injectivity radius of $C(N_i)$ in the sense of Definition 1 is bounded below, then we are done by Theorem \ref{coreinject}, hence assume that the injectivity radius goes to zero as $i\ra\infty$.
This is possible only along compression disks which are the dual core of attached one-handles since the injectivity radius is bounded below on $C_i^j$.

Let the length of the meridian, which is the boundary of compression disk $D_i$, tend to zero as $i\ra\infty$. See Figure 1.
Then by Theorem \ref{B-C}, the meridian length in corresponding conformal boundary tends to zero also. By the proof of Lemma \ref{meridian}, there exists a solid
cylinder neighborhood of $D_i$ whose height tends to infinity. Hence $h$ is non-empty and  the length of any geodesic crossing $h$  goes to infinity.


Find a geodesic $\gamma$ which is not in $P_1(C(N_1^1))$, neither in
$P_2(C(N_1^2))$. Then $\gamma$ must cross $h$. Take $i$ large enough
so that the length in $N_i$ of any geodesic crossing the 1-hanlde
$h$ is much greater than $l_{N_1}(\gamma)$. This is possible  by Lemma \ref{meridian} as explained above. Now by assumption, $N_i$ and $N_1$ have the same
length spectrum {\bf with multiplicity}. Note also that $P_j(C(N_i^j))$
and $P_j(C(N_1^j)), j=1,2$ have the same length spectrum since
$N_i^j$ are isometric. If $l_{N_1}(\gamma)$ is the length of a
geodesic in $N_1^j$, the multiplicity for $l_{N_1}(\gamma)$ will be
different for $N_1$ and $N_i$. If $l_{N_1}(\gamma)$ is not the
length of any geodesic in $N_1^j$, there will be no geodesic in
$N_i$ whose length is equal to $l_{N_1}(\gamma)$. In either case,
$N_1$ and $N_i$ do not have the same length spectrum. Note that we
are considering the length spectrum with {\bf multiplicity}. Hence the injectivity radius on $C(N_i)$ should remain bounded below, and we are done by Theorem \ref{coreinject}.

Now we deal with the case where there are many 1-handles.
 For a
given $M$, suppose $M=M_1\cup h^1 \cup M_2 \cup h^2 \cup \cdots \cup h^t \cup M_{t+1}$.  By
taking a cover $N_i^k$ corresponding to $M_k$, by Theorem
\ref{incompressible-bis} after passing to a subsequence, we may
assume that all $N_i^k$ are isometric to each other for a fixed $k$.
Now using the projection $P_k:N_i^k\ra N_i$, there are two cases to
consider. If the injectivity radius is bounded below on $C(N_i)$,  we
are done by Theorem \ref{coreinject}.  As before, decompose $C(N_i)=C_i^1 \cup h_i^1 \cup C_i^2 \cup h_i^2 \cup C_i^3\cdots$ such that $C_i^l$ contains $P_l(C(N_i^l))$, convex and homotopy equivalent to $M_l$.  Note that the injectivity radius on $C_i^k$ is uniformly bounded below.

If the injectivity radius on $C(N_i)$ is not bounded below, choose the first $h_i^l$ whose height tends to infinity as in the previous case.

\begin{lemma}\label{collect}Let $M'=M_1\cup h^1\cup \cdots \cup h^{l-1}\cup M_{l}$ and $N_i'$ the corresponding covering manifolds. Then $N_i'$ are isometric to each other after passing
to a subsequence.
\end{lemma}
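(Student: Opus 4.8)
The plan is to reduce this to the incompressible-boundary case already handled in Theorem \ref{incompressible-bis}, by showing that the submanifold $M' = M_1 \cup h^1 \cup \cdots \cup h^{l-1} \cup M_l$ is itself a hyperbolic $3$-manifold with \emph{incompressible} boundary whose length spectrum without multiplicity is contained in the fixed discrete set $\Lambda(M)$. First I would observe that $N_i'$ is convex cocompact: it is a cover of the convex cocompact manifold $N_i$ corresponding to the subgroup $\pi_1(M') \leq \pi_1(M)$, and since $M'$ is a compact core of this cover, the Covering Theorem (\cite{Ca2}, already invoked above for the $N_i^j$) applies to give convex cocompactness. Next I would check that $\partial M'$ is incompressible in $M'$. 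Here is where the hypothesis that $M$ has no handlebody (free) factor is used crucially: by the structure of the incompressible core, $\pi_1(M) = \pi_1(M_1) * \cdots * \pi_1(M_{t+1})$ with each $\pi_1(M_j)$ nontrivial and non-free, so $\pi_1(M') = \pi_1(M_1) * \cdots * \pi_1(M_l)$ inherits the same property — no free factor — and the compressing-disk/loop-theorem argument shows that a compressible boundary component would force a free factor (a meridian would give a splitting $\pi_1(M') = G * \bz$). So $\partial M'$ is incompressible.

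Once $M'$ has incompressible boundary, the main point is that the length spectrum (without multiplicity) of $N_i'$ lies in the fixed discrete set. This follows because for any $\gamma \in \pi_1(M') \leq \pi_1(M)$, the covering projection $P' : N_i' \to N_i$ restricts to a degree-one homeomorphism of the closed geodesic representing $\gamma$ in $N_i'$ onto the closed geodesic representing $\gamma$ in $N_i$ — exactly the argument already used above for $P_j(C(N_i^j))$. Hence $l_{N_i'}(\gamma) = l_{N_i}(\gamma) \in \Lambda(M)$, so $\Lambda(N_i') \subset \Lambda(M)$, which is discrete. Now Theorem \ref{incompressible-bis} applies verbatim to the family $\{N_i'\}$: there are only finitely many isometry types among convex cocompact hyperbolic $3$-manifolds homotopy equivalent to $M'$ with length spectrum (without multiplicity) contained in $\Lambda(M)$. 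Passing to a subsequence, all $N_i'$ are isometric to each other.

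The main obstacle I anticipate is the incompressibility step — making rigorous that $M'$, which is assembled from incompressible pieces by \emph{compression} $1$-handles, nonetheless has incompressible boundary in its own right. The $1$-handles $h^1, \dots, h^{l-1}$ are compressing handles \emph{inside $M$}, but within $M'$ the relevant fact is purely group-theoretic: $\pi_1(M')$ is a free product of the non-free, nontrivial groups $\pi_1(M_1), \dots, \pi_1(M_l)$, and Grushko's theorem together with the loop theorem rules out a meridian, since a meridian would exhibit $\bz$ as a free factor of $\pi_1(M')$, contradicting the no-free-factor property that $M'$ inherits from $M$. I would also note for completeness that one must record that the marking $\phi_i$-type remarkings supplied by Theorem \ref{incompressible-bis} are harmless here, since we only need isometry of the underlying manifolds $N_i'$, not of the markings; this is consistent with the way Theorem \ref{incompressible-bis} is stated and used elsewhere in the paper.
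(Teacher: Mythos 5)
Your reduction to Theorem \ref{incompressible-bis} fails at exactly the step you flagged: $\partial M'$ is \emph{not} incompressible. The manifold $M'=M_1\cup h^1\cup\cdots\cup h^{l-1}\cup M_l$ is assembled by attaching 1-handles, and the co-core disk of each handle $h^j$ is a properly embedded disk in $M'$ whose boundary is an essential (separating) curve in $\partial M'$ --- a meridian. Your Grushko/loop-theorem argument does not exclude this, because a \emph{separating} compressing disk corresponds to a free-product splitting $\pi_1(M')=A*B$ with $A=\pi_1(M_1\cup\cdots\cup M_j)$ and $B=\pi_1(M_{j+1}\cup\cdots\cup M_l)$, not to a $\bz$ free factor; the paper's no-free-factor hypothesis only forbids splittings of the form $G*F_n$, and is perfectly compatible with free products of non-free groups (indeed $\pi_1(M)$ itself is such a free product). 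This is the whole reason the main theorem is delicate: $M$ and $M'$ genuinely have compressible boundary even though $\pi_1$ has no free factor. So Theorem \ref{incompressible-bis} cannot be invoked for $M'$. A second symptom of the problem is that your argument never uses the hypothesis, built into the choice of $M'$ in the proof of Theorem \ref{nonhandle}, that the handles $h^1,\dots,h^{l-1}$ have \emph{bounded} lengths along the sequence; without that hypothesis the conclusion is false (a handle whose length tends to infinity forces translation lengths of curves crossing it to blow up, so the $N_i'$ cannot eventually be isometric), so any correct proof must use it.

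The paper's proof uses precisely that boundedness: since the $N_i^k$ are already isometric to each other for each fixed $k$ and the connecting handles have bounded length, every $\gamma\in\pi_1(M')=\pi_1(M_1)*\cdots*\pi_1(M_l)$, written as a word in the factors with connecting arcs over the handles, has translation length $l_{\rho_i}(\gamma)$ uniformly bounded in $i$. Morgan's compactness result (Theorem \ref{convergence}) then yields an algebraically convergent subsequence $\rho_i:\pi_1(M')\ra PSL(2,\bc)$, and the discreteness of the spectrum together with the finite determining set of Lemma \ref{determining} forces $l_{\rho_i}(\gamma_j)$ to stabilize, hence the $N_i'$ are isometric along a subsequence. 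Your final two steps (containment of the spectrum in $\Lambda$, and the determining-set finish) are fine and match the paper, but the convergence input must come from Theorem \ref{convergence} via the bounded-handle-length estimate, not from the incompressible-boundary theorem.
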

\begin{proof}By construction, all the 1-handles  in $C(N_i')$ have bounded lengths.  Any element $\gamma$ in $\pi_1(M')=\pi_1(M_1)*\cdots *
\pi_1(M_l)$  can be written as $\gamma_1*\cdots*\gamma_l$ where base points for $\gamma_i$ can be connected by arcs going over 1-handles.
Since 1-handles have bounded lengths and $N_i^k$ are all isometric to each other, each $\gamma_i$ is realized as a geodesic which has the same length for all $i$, and the arcs connecting them have the bounded lengths.  This implies that any $\gamma$ in $\pi_1(M')$  can be realized as a geodesic which has a uniform bounded length for all $i$.  Then by Theorem \ref{convergence}, the representations $\rho_i:\pi_1(M')\ra PSL(2,\bc)$ corresponding to $N_i'$ converge to some representation $\rho$ after passing to a subsequence.
Since the length spectrum of $N_i'$ is contained in a fixed discrete length spectrum, using the same argument as in the proof of Theorem \ref{incompressible-bis} on a finitely many determining elements in $\pi_1(M')$, after passing to a subsequence, $N_i'$ are all isometric to each other.
\end{proof}

In this way collect submanifolds $M'_1,\cdots, M'_k$ so that $M=M'_1\cup h^1\cup \cdots \cup h^{k-1} \cup M_k'$ and the handles in $M'_m, m=1,\cdots,k$ have bounded lengths throughout the sequence, and the other handles are getting longer as $i\ra\infty$.
Then after passing to a subsequence, all the coverings $N'_{m,i}, m=1,\cdots,k$ of $N_i$ corresponding to $M'_m$ are isometric to each other by Lemma \ref{collect}.   Let $P_m:N'_{m,i}\ra N_i$ be a covering
map, and decompose  
$$C(N_i)=C_i^1\cup h_i^1\cup \cdots \cup h_i^{k-1} \cup C^k_i$$ where $C_i^m$ is a convex set
containing $P_m(C(N'_{m,i}))$ homotopy equivalent to $M'_m$. Note that the length of $h_i^m$ goes to infinity as $i\ra\infty$ and the length spectrum on
$C_i^m$ is fixed for all $i$ since $N'_{m,i}$ are all isometric to each other.

Now  apply the same logic as in the case where there exists only one 1-handle as follows.
Choose $\gamma$ passing through $h^1$. 
Choose $i$ large enough so that any loop passing through one of the handles has length in $N_i$ greater than
$\ell_{N_1}(\gamma)$. This is possible since all the lengths of handles $h_i^m, m=1,\cdots, k$, go to infinity.
If $\ell_{N_1}(\gamma)$ is realized as a length of some geodesic in $C^m_1$, the multiplicity in $N_1$ for
$\ell_{N_1}(\gamma)$ is greater than the multiplicity in $N_i$.
If $\ell_{N_1}(\gamma)$ is not realized as a length of any curve in $C^m_1$ for all $m=1,\cdots,k$, 
this length is not realized in $N_i$ at all. In any case, $N_1$ and $N_i$ will have different length spectrums with
multiplicity, which is a contradiction.

Hence, there is no handle whose length goes to infinity. This implies that the injectivity radius of $C(N_i)$ is bounded below. Then by Theorem \ref{coreinject}, we are done.
\end{pf}
{\bf Remark} Somehow handlebody case is difficult to handle. If two generators are getting longer and their invariant
axes are getting closer, it is hard to draw a contradiction.
\section{Rank one symmetric space case}\label{rankone}
In this section we prove the isospectral finiteness of surface group
in rank one symmetric space. It is known that one can deform a
totally real hyperbolic surface sitting in complex, quaternionic and
octonionic hyperbolic manifold \cite{Kim3}. If the deformation is
small, the group becomes convex cocompact.

Let $M$ and $N$ be Riemannian manifolds of dimension $m,n$
respectively with metric tensors $(\gamma_{\alpha\beta})$ and
$(g_{ij})$. The curvature tensor is defined by
$$R(X,Y)Z=\nabla_X\nabla_Y Z-\nabla_Y\nabla_X Z- \nabla_{[X,Y]}Z$$
and the sectional curvature spanned by orthonormal $X,Y$ is
$$K(X,Y)=\langle R(X,Y)Y, X \rangle.$$

Let $f:M\ra N$ be a $C^1$ map and using local charts
$(x^1,\cdots,x^m)$ on $M$, $(f^1,\cdots,f^n)$ on $N$, the energy
density is
$$e(f)(x)=\frac{1}{2}\gamma^{\alpha\beta}(x)g_{ij}(f(x))
\frac{\partial f^i(x)}{\partial x^\alpha}\frac{\partial
f^j(x)}{\partial x^\beta}.$$ If $\{e_1,\cdots,e_m\}$ is an
orthonormal basis of $T_xM$, then
$$e(f)(x)=\frac{1}{2}\sum \langle df(e_i), df(e_i)\rangle=\frac{1}{2}|df|^2.$$
The energy of $f$ is
$$E(f)=\int_M e(f) dvol_M.$$
A solution of the Euler-Lagrange equation for $E$ is called a
harmonic map.

The Laplace-Beltrami operator $\triangle$ is defined so that
$$\triangle f=-\sum \frac{\partial^2 f}{(\partial x^i)^2}$$ for real
valued function $f$.

Bochner type equality for a harmonic map $f:M\ra N$ is \cite{J}
$$-\triangle e(f)(x)$$$$=|\nabla df|^2 + \langle df Ric(e_i), df(e_i)
\rangle - \langle R_N(df(e_i), df(e_j))df(e_j), df(e_i) \rangle.$$
Here $\{e_i\}$ is an orthonormal basis of $T_xM$ and $$Ric(v)=\sum
R_M(v,e_i)e_i.$$

Let $(S,\sigma)$ be a hyperbolic surface and $G$ a semisimple Lie
group, $X$ the associated symmetric space. Let $\rho:\pi_1(S)\ra G$
be a representation whose image is reductive. Let $J$ be a complex
structure compatible with $\sigma$ on $(S,\sigma)$ lifted to $\tilde
S$ and $f:\tilde S\ra X$ a smooth $\rho$-equivariant map. Then
$$(df\wedge df\circ J)(u,v)=\frac{1}{2}(\langle
df(v),df(Ju)\rangle-\langle df(u),df(Jv)\rangle)$$ defines an
exterior differential 2-form on $\tilde S$, which descends to $S$.
The energy of $f$ with respect to $J$ (or $\sigma$) can be
alternatively defined by $$E(J,f)=\int_S df\wedge df\circ J.$$ Then
the energy functional $E_\rho$ on $\cal T(S)$ is defined by
$$E_\rho(J)=\inf_f E(J,f)$$ for all $\rho$-equivariant $f$.

Then there exists  a $\rho$-equivariant harmonic map $f:(\tilde
S,\tilde \sigma) \ra X$ inducing $\rho$ with energy
$E_\rho(\sigma)$, see \cite{Co}. This function $E_\rho$ is a proper
function on $\cal T(S)$ if $\rho$ is convex cocompact \cite{La}. So
there is a harmonic map $f_\rho$ and a hyperbolic metric
$\sigma_\rho$ so that
$$f_\rho:(\tilde S,\tilde \sigma_\rho)\ra X$$ minimizes the energy
for all such possible choices of $f$ and $\sigma$. This map is known
to be conformal \cite{SU}.

\begin{thm}\label{main2}
Let $G$ be a semisimple real Lie group  of rank one of noncompact
type.
 Fix $\Lambda$ a discrete set
of positive real numbers, and a closed surface $S$ of genus $\geq
2$. Then the set of  convex cocompact representations
$\rho:\pi_1(S)\ra G$ with $\Lambda_\rho=\Lambda$ is finite up to
conjugacy  and the change of marking. Here $\Lambda_\rho$ is the set of translation lengths of
$\rho(\pi_1(S))$.
\end{thm}
\begin{pf} Suppose there are infinitely many non-conjugate
representations $\rho_i:\pi_1(S)\ra G$ with
$\Lambda_{\rho_i}=\Lambda$. Choose a harmonic map $h_i:(\tilde
S,\sigma_i)\ra X$ so that the energy is the smallest among all such
$h_i$ inducing $\rho_i$ and among all hyperbolic metrics $\sigma_i$.
Then it is known that \cite{SU}, $h_p$ is conformal, i.e.,
$dh_p(e_1)$ and $dh_p(e_2)$ are orthogonal and have the same norm
for an orthonomal basis $\{e_1,e_2\}$ of $T_x(S,\sigma_p)$. Hence
$|dh_p|^2=2|dh_p(e_1)|^2$. Then by Bochner formula
$$-\frac{1}{2}\triangle|dh_p|^2=|\nabla dh_p|^2$$$$+ \langle dh_p
Ric_{\sigma_p} (e_i),dh_p(e_i)\rangle - \langle
R_X(dh_p(e_i),dh_p(e_j))dh_p(e_j),dh_p(e_i)\rangle,$$
 where
${e_1,e_2}$ is an orthonormal basis at a point of $(S,\sigma_p)$.

Since $$Ric(e_1)=R(e_1,e_1)e_1+R(e_1,e_2)e_2$$ and since $$\langle
Ric(e_1), e_1 \rangle=\langle R(e_1,e_2)e_2, e_1\rangle=-1,$$ and
similar for $Ric(e_2)$, we get $Ric(e_i)=-e_i+x_ie_j$ to have
$$-\frac{1}{2}\triangle |dh_i|^2$$
$$\geq -|dh_i|^2 -
|dh_i(e_1)|^42K_X(\frac{dh_i(e_1)}{|dh_i(e_1)|},\frac{dh_i(e_2)}{|dh_i(e_1)|}),$$
where $X$ is an associated symmetric space. Since $X$ is a rank one
symmetric space,
$K_X(\frac{dh_i(e_1)}{|dh_i(e_1)|},\frac{dh_i(e_2)}{|dh_i(e_1)|})
\leq -1$. This implies that
$$-\frac{1}{2}\triangle |dh_i|^2\geq
|dh_i|^2(|dh_i(e_k)|^2-1)$$ for $e_k,k=1,2$. If $x_0\in S$ is a
maximum point of $ |dh_i|^2$, then $$-\triangle |dh_i|^2(x_0)\leq 0,$$
so we get $$|dh_i(e_k)|^2 \leq 1,$$ which implies that
$$|dh_i(v)|\leq |v|$$ for $v\in T_{x_0}S$.
For any $x\in S$, since $$2|dh_i(x)(e_{k,x})|^2=|dh_i(x)|^2\leq
|dh_i(x_0)|^2=2|dh_i(x_0)(e_{k,x_0})|^2\leq 2$$ for $k=1,2$, we get
$$l_{\rho_i}(\gamma)\leq l_{\sigma_i}(\gamma),$$ for all $\gamma\in
\pi_1(S)$. But since $l_{\rho_i}(\gamma)\geq r_0$ where $r_0$ is the
smallest number in $\Lambda$, using Mumford compactness theorem, we
get, after changing the marking of  $(S,\sigma_i)$, $(S,\sigma_i)\ra
(S,\sigma)$. Then after changing the marking of $\rho_i$
accordingly, we get
$$l_{\rho_i}(\gamma)\leq Cl_{\sigma}(\gamma),$$ for all $\gamma\in
\pi_1(S)$.

Then one can pass to a subsequence so that $\rho_i\ra \rho$. Then
$$l_{\rho_i}(\gamma)\ra l_\rho(\gamma)$$ for all $\gamma\in
\pi_1(S)$. By discreteness of $\Lambda$, for each $\gamma$, we may
assume that $l_{\rho_i}(\gamma)= l_\rho(\gamma)$ for large $i$. We
repeat this for finitely many $\{\gamma_1,\cdots,\gamma_n\}$ which
determines the representations in $G$ up to conjugacy by Lemma
\ref{determining}. So we conclude that $\rho_i$ and $\rho$ are
conjugate for all $i$, which is forbidden by the assumption.
\end{pf}
\section{Appendix A}
The materials covered in this appendix are well-known to the
experts, specially for dimension 3 it is classical, though it is
difficult to find a right literature for higher dimension, but for
the convenience of the readers we give some proofs. The goal of this
appendix is to show that the hyperbolic metric is uniquely
determined by the metrics on the smooth convex boundary, Theorem
\ref{boundarymetric}. In this section $H^n_\br=H^n$.

A homeomorphism $\phi:X \ra Y$ is $K$-{\sl quasiconformal} if
$\phi$ has distributional first derivatives locally in $L^n$, and
$$\frac{1}{K}|\text{ det}D\phi(x)|\leq (\frac{|D\phi(v)|}{|v|})^n \leq K|\text
{det} D\phi(x)|$$ for almost every $x$ and every nonzero vector
$v\in T_xX$. It is well-known from the techniques of Mostow
rigidity that any quasiisometry from $H^n$ to itself extends to a
quasiconformal map from the ideal boundary $S^{n-1}$ to itself.
Also it is clear that if the extended map is a conformal map, then
the original map is isotopic to an isometry.

Here we use a technique of Sullivan \cite{sul}. The action of a
discrete group $\Gamma$ on a measure space is {\sl conservative}
if there is no positive measure set $A$ so that the translates
$\{\gamma(A): \gamma \in \Gamma\}$ are disjoint up to measure zero
set. In our case, we are using a Lebesque measure on $S^{n-1}$.
The following is due to Sullivan.
\begin{thm}A discrete torsion free subgroup of $Iso(H^n)$ admits no invariant
$k$-plane field $(0<k<n-1)$ on the part of $S^{n-1}_\infty$ where
its action is conservative.
\end{thm}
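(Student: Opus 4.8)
The plan is to argue by contradiction in the spirit of Sullivan's rigidity arguments, using conservativity to produce recurrence and then the explicit conformal structure of elements of $Iso(H^n)$ to defeat it. Suppose $\Gamma\subset Iso(H^n)$ is discrete and torsion free and that $\tau$ is a measurable $\Gamma$-invariant field of $k$-planes, $0<k<n-1$, defined on the conservative part $C\subseteq S^{n-1}_\infty$, which we may assume has positive Lebesgue measure. Fix a stereographic chart; every $g\in Iso(H^n)$ then acts conformally, so its differential factors as $Dg_\xi=\lambda(\xi,g)\,R(\xi,g)$ with $\lambda(\xi,g)>0$ and $R(\xi,g)\in O(n-1)$, and the cocycle identity holds separately for $\lambda$ and for $R$. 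Invariance of $\tau$ becomes $R(\xi,\gamma)\,\tau(\xi)=\tau(\gamma\xi)$ for a.e. $\xi\in C$ and all $\gamma\in\Gamma$; that is, $\tau$ is an invariant section, valued in the Grassmannian $\mathrm{Gr}_k(\br^{n-1})$, for the orthogonal part of the derivative cocycle.

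First I would localize. By the Lebesgue density theorem and Egorov's theorem, for every $\epsilon>0$ there is a $k$-plane $P_0$, a point $\xi_0\in C$, and a small ball $B=B(\xi_0,r)$ so that on a subset $A\subseteq B\cap C$ with $\mu(A)>(1-\epsilon)\mu(B)$ the plane $\tau(\xi)$ lies within $\epsilon$ of $P_0$ in $\mathrm{Gr}_k$. Since the $\Gamma$-action on $C$ is conservative, $A$ is non-wandering, and in fact a.e. point of $A$ returns to $A$ under infinitely many distinct elements; so there are infinitely many distinct $\gamma\in\Gamma$ with $\mu(A\cap\gamma^{-1}A)>0$. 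For any such $\gamma$, on the positive-measure set $A\cap\gamma^{-1}A$ we have $R(\xi,\gamma)P_0\approx\tau(\gamma\xi)\approx P_0$, so $R(\cdot,\gamma)$ very nearly fixes $P_0$ on a subset of nearly full density in $B$. The crucial algebraic fact is that $\mathrm{Stab}_{O(n-1)}(P_0)=O(k)\times O(n-1-k)$ is a \emph{proper} closed subgroup of $O(n-1)$ precisely because $0<k<n-1$; thus the rotational part of $\gamma$ is pinned near a lower-dimensional subgroup over a large portion of a ball.

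The heart of the matter — and the step I expect to be the main obstacle — is converting this pinning into a contradiction, which requires a renormalization/limiting argument. Two cases must be combined. If $\gamma$ does not fix the chart's point at infinity, then $\gamma$ is not a Euclidean similarity in the chart, the map $\xi\mapsto R(\xi,\gamma)$ is given by an explicit real-analytic (Poisson-kernel/inversion type) formula and is genuinely non-constant, so by real analyticity together with a normal-families argument one cannot have $R(\xi,\gamma)P_0\approx P_0$ on a set of nearly full measure in a ball for arbitrarily small $\epsilon$ — contradiction. If instead all the recurrence elements produced above are Euclidean similarities in the chart, they all fix the chart's point at infinity and hence lie in a common subgroup $H\le\mathrm{Stab}_\Gamma(\infty)$; $H$ is a discrete group of similarities of $\br^{n-1}$, so its limit set lies in a proper subsphere of measure zero and its action on $S^{n-1}$ off that null set is properly discontinuous, hence completely dissipative — but the points of $A$ recur to $A$ infinitely often under $H$, contradicting dissipativity. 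Running this dichotomy along a sequence $\epsilon_m\to0$ (re-choosing $P_0,\xi_0,B$ each time and extracting a limiting similarity or a limiting exact invariant reduction) yields the contradiction, so no invariant $k$-plane field exists on $C$. The hypotheses are used exactly where expected: discreteness to control the derivative cocycle and to make the similarity alternative dissipative, conservativity for the recurrence, and torsion-freeness only to avoid fixed-point-locus complications.
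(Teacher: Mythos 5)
The paper itself does not prove this statement: it is quoted as Sullivan's theorem, with \cite{Mcc} cited for a direct proof, so your attempt has to be judged on its own terms. Your setup is the right opening and is sound: the conformal factorization $Dg_\xi=\lambda(\xi,g)R(\xi,g)$, the density-point localization of $\tau$ near a fixed plane $P_0$, Halmos recurrence from conservativity, and the observation that $\mathrm{Stab}_{O(n-1)}(P_0)=O(k)\times O(n-1-k)$ is proper exactly when $0<k<n-1$. The proof breaks at the step you yourself flag as the main obstacle, and the proposed dichotomy does not close it. A first, smaller error: $\mu(A\cap\gamma^{-1}A)>0$ gives $R(\xi,\gamma)P_0\approx P_0$ only on a set of \emph{positive} measure, not of nearly full density in $B$ (for that you would need $\gamma^{-1}A$ to also have nearly full density in $B$, which mere recurrence does not provide). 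The serious problem is Case 1: the claim that a non-constant real-analytic $\xi\mapsto R(\xi,\gamma)$ cannot nearly stabilize $P_0$ on a large subset of $B$ is false as a statement uniform over the infinite family of recurrence elements. Writing $\gamma$ as a similarity composed with an inversion centered at the pole $b=\gamma^{-1}(\infty)$, one has $R(\xi,\gamma)=A\,\rho_{\xi-b}$ with $\rho_v$ the reflection fixing $v^{\perp}$; when $b$ is far from $B$ relative to its radius, $R(\cdot,\gamma)$ is nearly constant on $B$, and its essentially single value can perfectly well lie near $O(k)\times O(n-1-k)$. Such $\gamma$ are not similarities, so they escape Case 2, and no normal-families argument excludes them, because the family $\{\xi\mapsto R(\xi,\gamma)\}_{\gamma}$ is not bounded away from the constant maps into the stabilizer: for each fixed $\gamma$ real analyticity only gives a measure-zero exact-invariance locus, with no lower bound, uniform in $\gamma$, on the measure of $\{\xi\in B:\mathrm{dist}(R(\xi,\gamma)P_0,P_0)>\epsilon\}$.

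The missing idea is renormalization at the correct scale, which is the heart of Sullivan's argument: one conjugates each recurrence element $\gamma$ by similarities adapted to the density point and to its image so that the renormalized maps have uniformly bounded distortion and definite displacement, extracts a limiting M\"obius transformation, and uses density of $A$ at \emph{both} ends to conclude that the limit preserves the parallel field $P_0$ exactly on an open set; one then rules this out using the finer structure of the conservative part (its a.e.\ identification with the horospherical limit set guarantees limits that are not similarities stabilizing $P_0$). Your closing sentence about running the dichotomy along $\epsilon_m\to0$ and ``extracting a limiting similarity or a limiting exact invariant reduction'' is exactly this step, but it is asserted rather than carried out, and the dichotomy it relies on is not exhaustive in the needed sense. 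As written, the argument has a genuine gap.
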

A direct proof can be found in \cite{Mcc}.

First we prove the following observation.
\begin{lemma}\label{conservative}Suppose $\Gamma$ is a discrete group of $Iso(H^n)$.
If the convex core of $\Gamma$ has an upper bound on the
injectivity radius, then $\Gamma$ acts conservatively on its limit
set $L_\Gamma$.
\end{lemma}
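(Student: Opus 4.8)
The plan is to argue by contradiction using the ergodic-theoretic characterization of conservativity together with the classical Hopf--Tsuji--Sullivan type dichotomy. Suppose $\Gamma$ does not act conservatively on $L_\Gamma$ with respect to Lebesgue measure on $S^{n-1}$. Then by definition there is a positive-measure set $A \subset L_\Gamma$ whose $\Gamma$-translates $\{\gamma(A)\}$ are pairwise disjoint up to measure zero (a ``wandering set''). The first step is to translate this wandering set into a statement about the geometry of $H^n/\Gamma$: roughly, the existence of a positive-measure wandering set on the limit set forces the Poincar\'e series of $\Gamma$ to converge, equivalently the geodesic flow on the unit tangent bundle of $H^n/\Gamma$ is of ``transient'' type, so that almost every geodesic ray (in the visual measure from a basepoint) eventually leaves every compact set and escapes to the domain of discontinuity. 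This is the standard dichotomy: conservative $\Leftrightarrow$ Poincar\'e series diverges $\Leftrightarrow$ full-measure set of geodesics recurrent.

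Next I would convert ``almost every geodesic escapes to infinity'' into a contradiction with the upper bound on the injectivity radius of the convex core. Here is the mechanism. If $\inj(C(\Gamma)) \le r_0$ for some fixed $r_0$, then in the sense of Definition \ref{inj} the convex core is coverable by boundedly many balls of a fixed radius; in particular one obtains a uniform upper bound on the number of disjoint embedded balls of radius $r_0$ inside $C(\Gamma)$, hence on the ``size'' of $C(\Gamma)$ at scale $r_0$. But if the limit set has positive Lebesgue measure (which it must, since a wandering set of positive measure sits inside it) then in fact $L_\Gamma = S^{n-1}$ up to measure zero, the domain of discontinuity is null, and by the Sullivan ergodicity dichotomy the action on $S^{n-1}$ is \emph{either} conservative and ergodic \emph{or} totally dissipative. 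Total dissipativity with $L_\Gamma$ of full measure is precisely what we are assuming; I would rule this out by showing that a totally dissipative action of full-measure limit set forces the convex core to be noncompact in a strong quantitative way — concretely, it forces geodesics to spend arbitrarily long stretches with injectivity radius bounded below moving through $C(\Gamma)$, so the diameter (or the ball-covering number) of $C(\Gamma)$ is unbounded, contradicting the upper injectivity radius bound via the counting estimate used in the proof of Theorem \ref{coreinject}.

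Concretely the key steps in order are: (1) assume non-conservativity, extract a positive-measure wandering set $A \subset L_\Gamma$; (2) note $A$ has positive Lebesgue measure, so $L_\Gamma$ is non-null, hence by Sullivan's dichotomy the action is \emph{totally dissipative} and $L_\Gamma$ has full measure; (3) invoke the Hopf--Tsuji--Sullivan correspondence to conclude the geodesic flow on $H^n/\Gamma$ is transient, i.e. Lebesgue-a.e. geodesic leaves every compact set; (4) observe that the convex core $C(\Gamma)$ carries all closed geodesics and is a deformation retract, and that a transient geodesic flow together with $L_\Gamma = S^{n-1}$ (a.e.) means the recurrent geodesics through $C(\Gamma)$ wander through longer and longer necks, forcing the $r_0$-ball covering number of $C(\Gamma)$ to be infinite; (5) compare with the upper bound on $\inj(C(\Gamma))$: by Definition \ref{inj} a finite injectivity radius gives only a \emph{finite} number of $r_0$-separated embedded balls, contradiction. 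The main obstacle I expect is step (4): making precise why transience of the flow, under the full-measure limit set hypothesis, actually forces unbounded geometric complexity of $C(\Gamma)$ rather than merely permitting an infinite-volume cusp-like end — i.e. ruling out the possibility that the escaping geodesics all escape ``quickly'' through a region of small injectivity radius without forcing many well-separated large balls. Handling this cleanly will probably require the fact that on $C(\Gamma)$ the injectivity radius being bounded above means there is a definite lower bound on injectivity radius on a cofinite part of $C(\Gamma)$ where the recurrent geodesics live, so that time spent recurrently translates into a definite number of distinct embedded balls.
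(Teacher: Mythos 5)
There is a genuine gap here --- in fact two. First, your step (2) asserts that because the wandering set $A\subset L_\Gamma$ has positive Lebesgue measure, $L_\Gamma$ must have full measure in $S^{n-1}$ and the action must be totally dissipative. The first claim is the measure-theoretic Ahlfors conjecture; the paper itself remarks, immediately after this lemma, that this is only available in dimension $3$ (via tameness) and is not known for $n\geq 4$, whereas the lemma is stated and needed for all $n\geq 3$. The dichotomy you invoke is also misapplied: the general Hopf decomposition of the action on $(S^{n-1},\text{Lebesgue})$ has a conservative part and a dissipative part (the conservative part being, up to null sets, the conical limit set), and the existence of one positive-measure wandering set does not make the whole action dissipative. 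Second, and more fundamentally, you have inverted the hypothesis. An \emph{upper} bound on the injectivity radius of the convex core does not bound the number of separated balls needed to cover it (that kind of finiteness is what a \emph{lower} bound plus a volume bound buys in the proof of Theorem \ref{coreinject}); it says that no point of the convex core admits a large embedded ball. So your steps (4)--(5), which try to contradict a supposed finite ball-covering number, aim at the wrong target --- and you concede yourself that step (4), the only genuinely geometric step, is unresolved. The contradiction you actually need is a point of the convex core where the injectivity radius exceeds any prescribed bound.

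The paper's proof produces exactly that, directly and in every dimension, with no appeal to Ahlfors' conjecture or to transience of the geodesic flow. From the positive-measure wandering set one deduces that $\overline{P_a}\cap L_\Gamma$ has positive Lebesgue measure, where $P_a$ is the Dirichlet polyhedron centered at $a$; a geodesic ray from $a$ to a Lebesgue density point of $\overline{P_a}\cap L_\Gamma$ lies in $P_a$ and in the convex hull of $L_\Gamma$, and its distance to $\partial P_a$ tends to infinity. Since $P_a$ is a fundamental domain, the injectivity radius along this ray tends to infinity inside the convex core, contradicting the hypothesis. This is Theorem 5.11 of \cite{MT} for $n=3$, and the same argument works for all $n\geq 3$; you should rebuild your proof around the Dirichlet-domain/density-point step rather than the ergodic dichotomy.
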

\begin{pf}Actually this statement is proved in Theorem 5.11 of
\cite{MT} for $H^3$ and the same proof works for any dimension
$\geq 3$, but for reader's convenience we sketch the proof.
Suppose not. Then there is a positive measure set $A \subset
L_\Gamma$ so that $\{\gamma(A): \gamma \in \Gamma\}$ are disjoint
up to measure zero sets. Then it is not difficult to see that for
a Dirichlet polyhedron $P_a=\{p\in H^n| d(p,a)\leq d(p,\gamma(a))\
\text{for any}\ \gamma\in \Gamma\}$,\ $\overline{P_a}\cap
L_\Gamma$ has positive measure. Then a geodesic ray from $a$ to a
Lebesgue density point in $\overline{P_a}\cap L_\Gamma$, which is
contained in $P_a$ and the convex hull of $L_\Gamma$, has
unbounded distance to $\partial P_a$. This shows that along this
geodesic ray, the injectivity radius tends to infinity since $P_a$
is a fundamental domain of $\Gamma$.
\end{pf}
For dimension 3, due to the solution of Ahlfors conjecture
\cite{agol, gabai}, the limit set of a finitely generated Kleinian
group has either zero measure or whole measure. If it has zero
measure, the action on the limit set is conservative automatically,
if whole measure, the limit set is $S^2$ and the action is ergodic,
so conservative. See \cite{Ca3}. But as far as we know, Ahlfors
conjecture is not known for dimension $\geq 4$.

 From these two facts we will
derive a useful conclusion. For dimension 3, it is a classical
result by Ahlfors-Bers theory. But for higher dimension, the
following theorem is not on the literature.
\begin{thm}\label{endisometric}Let $M=H^n/\Gamma$ be a convex cocompact hyperbolic $n\geq 3$-manifold.
If $N$ is a convex cocompact hyperbolic $n$-manifold homotopy
equivalent to $M$ so that corresponding ends are isometric by
isometries preserving the markings, then it is isometric to $M$.
\end{thm}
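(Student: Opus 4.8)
The plan is to upgrade the given end-isometries to a global isometry by a conformal-extension argument in the spirit of Mostow–Sullivan rigidity. First I would fix a homotopy equivalence $f:M\to N$ which restricts to the given isometry on each end. Since $M$ and $N$ are convex cocompact and homotopy equivalent, $f$ lifts to a quasiisometry $\tilde f:H^n\to H^n$ equivariant with respect to the isomorphism $\phi:\Gamma\to\Gamma'=\pi_1(N)$; by the standard Mostow technique this quasiisometry extends to a $\phi$-equivariant quasiconformal homeomorphism $F:S^{n-1}\to S^{n-1}$. The point of the hypothesis on the ends is that on the domain of discontinuity $\Omega(\Gamma)$ the map $F$ agrees (up to the equivariant identification coming from the marking-preserving isometries of the ends) with a conformal map: the conformal boundary $\Omega(\Gamma)/\Gamma$ is the disjoint union of the end surfaces, and an isometry between corresponding ends induces a conformal map between the corresponding components of the conformal boundary, hence a conformal map on each component of $\Omega(\Gamma)$ after lifting. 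So $F$ is conformal on $\Omega(\Gamma)$, and it remains to show $F$ is conformal on the limit set $L_\Gamma$ as well.

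For this I would invoke the two ingredients assembled just above in the appendix. Since $M$ is convex cocompact, its convex core is compact, so in particular the injectivity radius of the convex core is bounded above; by Lemma \ref{conservative}, $\Gamma$ acts conservatively on $L_\Gamma$. Now the Beltrami differential $\mu$ of $F$ is a $\Gamma$-invariant measurable conformal structure on $S^{n-1}$ which vanishes on $\Omega(\Gamma)$; its eigenspace distribution (the line field or $k$-plane field determined by the directions of maximal stretch of $\mu$, where $\mu\neq 0$) is a $\Gamma$-invariant measurable plane field supported on a subset of $L_\Gamma$. By Sullivan's theorem (the quoted theorem on the absence of invariant $k$-plane fields on the conservative part) such a field cannot exist on the conservative part of the action unless it is trivial, i.e. unless $\mu=0$ a.e. on $L_\Gamma$. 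For $n=3$ one can alternatively argue via Ahlfors' conjecture as recorded above: either $L_\Gamma$ has measure zero (so $\mu=0$ a.e. automatically) or $L_\Gamma=S^2$ and the action is ergodic, forcing $\mu$ to be a.e. constant multiple of a fixed structure, which combined with conformality on $\Omega$ again gives $\mu=0$. In all cases $F$ is conformal on all of $S^{n-1}$.

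A conformal $\phi$-equivariant homeomorphism of $S^{n-1}$ is, by the correspondence between $\mathrm{M\"ob}(S^{n-1})$ and $\Iso(H^n)$, the boundary value of an isometry $g$ of $H^n$ conjugating $\Gamma$ to $\Gamma'$; thus $g$ descends to an isometry $M\to N$, which is moreover homotopic to $f$ since it induces the same map $\phi$ on fundamental groups and $H^n$ is contractible. This proves the theorem. The main obstacle is the passage from conformality on $\Omega(\Gamma)$ to conformality on $L_\Gamma$ in dimension $\geq 4$, where Ahlfors' measure conjecture is unavailable: here one genuinely needs conservativity of the action on the limit set (Lemma \ref{conservative}, which is why convex cocompactness, not merely geometric finiteness, is used) together with Sullivan's no-invariant-plane-field theorem, rather than the classical Ahlfors–Bers machinery that suffices for $n=3$. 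A secondary technical point to be careful with is checking that the quasiconformal extension $F$ really does restrict to the \emph{given} end-isometries on $\Omega(\Gamma)$ rather than to some other conformal map in the same isotopy class; this is where the hypothesis that the end-isometries preserve the markings is used.
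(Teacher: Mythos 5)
Your proposal is correct and follows essentially the same route as the paper: extend the lifted homotopy equivalence to a quasiconformal boundary map, observe it is conformal on $\Omega(\Gamma)$ because the end-isometries induce conformal maps there, and rule out non-conformality on $L_\Gamma$ by combining Lemma \ref{conservative} (conservativity of the action, from compactness of the convex core) with Sullivan's theorem on the non-existence of invariant $k$-plane fields. Your added remarks on the $n=3$ case via Ahlfors' conjecture and on descending the resulting M\"obius map to an isometry are consistent elaborations of the same argument.
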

\begin{pf}
Let $\phi:M \ra N$ be a proper homotopy equivalence extending an
isometry on ends. Then its lift $\tilde\phi:H^n \ra H^n$ extends to
$S^{n-1}_\infty$ as a quasi-conformal map $\phi':S^{n-1}_\infty \ra
S^{n-1}_\infty$. Then $\phi'$ is differentiable almost everywhere.
The pullback of the spherical metric $\sigma$ by $\phi'$ determines
an ellipsoid in the tangent space to almost every point of
$S^{n-1}_\infty$. The vectors maximizing the ratio
$(\phi')^*\sigma(v)/\sigma(v)$ span a canonical subspace $E_x\subset
T_xS^{n-1}_\infty$, which cuts the ellipsoid in a round sphere of
maximum radius. If $\phi'$ is not conformal there is a positive set
on which $E_x$ has constant rank, which defines a $k$-plane field
invariant under $\Gamma$.

By Lemma \ref{conservative} and Sullivan's theorem, $\Gamma$
cannot have an invariant $k$-plane on the limit set. Now it
suffices to show that $\phi'$ is conformal on the domain of
discontinuity. But it follows from the assumption that the
corresponding ends are isometric.
\end{pf}
The other application is the following. Note here that we are using
a smooth  convex boundary, not the boundary of the convex core.
\begin{thm}\label{boundarymetric}Let $M$ be a $n$-manifold, $n>3$, which
admits a convex cocompact hyperbolic metric of constant sectional
curvature. Suppose $M$ admits a hyperbolic metric with a convex
smooth boundary. Then the induced metric on the boundary determines
the hyperbolic metric uniquely.
\end{thm}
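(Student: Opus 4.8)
The plan is to reduce to the rigidity of convex cocompact hyperbolic manifolds, Theorem \ref{endisometric}, by completing a hyperbolic metric with convex boundary to a convex cocompact one. Let $g$ be a hyperbolic metric on $M$ with smooth convex boundary, $h=g|_{\partial M}$ the induced boundary metric and $\mathrm{II}$ the second fundamental form of $\partial M$. First I would push $\partial M$ outward along its normal geodesics in $(M,g)$: in the resulting Fermi coordinates the metric is governed by the Riccati equation for equidistant hypersurfaces with initial data $(h,\mathrm{II})$, and since $\partial M$ is convex no focal points occur along outward normals, so the metric extends for all distances $t\in[0,\infty)$, with the shape operators converging to a limit and the induced metrics $g_t$ growing exponentially. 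Thus $g$ extends to a complete hyperbolic metric $\hat g$ on $\hat M:=M\cup_{\partial M}\bigl(\partial M\times[0,\infty)\bigr)$; since $M$ is compact and the attached ends are flaring, $\hat M$ is convex cocompact, and the collar deformation retracts onto $M$, so $\hat M$ is homotopy equivalent to $M$. The isometry type of the collar $\hat M\setminus\mathrm{int}\,M$ — in particular the conformal boundary $\partial_\infty\hat M$, which is the conformal class of $\lim_{t\to\infty}e^{-2t}g_t$ on $\partial M$, and hence the germ of each end of $\hat M$ near infinity — depends only on the pair $(h,\mathrm{II})$.

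The key point is that $h$ already determines $\mathrm{II}$. This is the rigidity of smooth convex hypersurfaces in hyperbolic space: an isometry between the intrinsic metrics of two smooth convex hypersurfaces of $H^n$ is induced by an isometry of $H^n$ (equivalently, on $(\partial M,h)$ there is a unique positive Codazzi tensor whose elementary symmetric function $\sigma_2$ takes the value forced by the Gauss equation). One applies this $\pi_1$-equivariantly to the lift $\partial\widetilde M\subset H^n$ of $\partial M$, a complete convex hypersurface with compact quotient, so that an isometry of universal covers carrying one deck action to the other extends to an isometry of $H^n$ intertwining the holonomies along $\pi_1\partial M$. Consequently, if $g_1$ and $g_2$ are two hyperbolic metrics on $M$ with convex smooth boundary and $g_1|_{\partial M}=g_2|_{\partial M}=h$, the completions $\hat M_1$ and $\hat M_2$ have, component by component, isometric ends, and these end--isometries can be chosen compatibly with the markings $\pi_1M\cong\pi_1\hat M_i$.

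Now $\hat M_1$ and $\hat M_2$ are convex cocompact hyperbolic $n$-manifolds, each homotopy equivalent to $M$ (hence to one another, compatibly with markings); their convex cores are compact, so by Lemma \ref{conservative} the actions on the limit sets are conservative, and by the previous paragraph their corresponding ends are isometric by marking--preserving isometries. Theorem \ref{endisometric} then yields a marking--preserving isometry $F:\hat M_1\to\hat M_2$; since the convex body $\widetilde M_i$ is the convex hull bounded by $\partial\widetilde M_i$, the equivariant rigidity of the previous paragraph lets us take $F$ to carry the copy of $(M,g_1)$ onto the copy of $(M,g_2)$, and restricting $F$ gives the desired isometry $(M,g_1)\to(M,g_2)$. (The hypothesis that $M$ itself admits a convex cocompact hyperbolic metric is what places the whole discussion within the framework of Theorem \ref{endisometric}.) The main obstacle is the second step: establishing the intrinsic-to-extrinsic rigidity of the convex boundary in dimension $n>3$ for the non-compact equivariant hypersurface $\partial\widetilde M$, and then upgrading the resulting compatibility from $\pi_1\partial M$ to all of $\pi_1M$ — the latter upgrade being precisely what the quasiconformal-extension and Sullivan no-invariant-plane-field arguments behind Theorem \ref{endisometric} provide.
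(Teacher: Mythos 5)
Your overall skeleton matches the paper's: show that the induced metric $h$ on $\partial M$ determines the second fundamental form $\mathrm{II}$, conclude that the boundary hypersurface and hence each end of the associated complete manifold is determined, and finish with Theorem \ref{endisometric}. The outward normal-flow completion to a convex cocompact manifold is also consistent with what the paper does implicitly when it lifts $M$ to a domain $U\subset H^n$ and identifies the components of $H^n\setminus U$ with the ends.

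However, there is a genuine gap at exactly the step you flag as ``the main obstacle,'' and it is the heart of the theorem. You defer the claim that $h$ determines $\mathrm{II}$ to a global rigidity theorem for convex hypersurfaces in $H^n$ (an intrinsic isometry of convex hypersurfaces extends to an ambient isometry, or uniqueness of a positive Codazzi tensor with prescribed $\sigma_2$). That is a Weyl-problem/Pogorelov-type statement which you do not prove, which is not available off the shelf for the noncompact equivariant hypersurface $\partial\widetilde M$, and which in any case is not the mechanism here: the Gauss equation prescribes far more than $\sigma_2$, namely the full expression $\mathrm{II}(X,W)\mathrm{II}(Y,Z)-\mathrm{II}(X,Z)\mathrm{II}(Y,W)$. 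The paper's argument is purely pointwise and elementary: in an orthonormal frame $\{e_i\}_{i=1}^{n-1}$ diagonalizing the shape operator with eigenvalues $b_i$, the Gauss equation gives $K(e_i,e_j)=-1+b_ib_j$, and when $n-1\geq 3$ the products $b_ib_j$ determine the $b_i$ (e.g.\ $b_i^2=\frac{(K(e_i,e_j)+1)(K(e_i,e_k)+1)}{K(e_j,e_k)+1}$, with the overall sign fixed by convexity). This is precisely where the hypothesis $n>3$ enters, a point your argument never uses --- the global rigidity you invoke would be asserted in all dimensions, which is a sign that the actual mechanism has been missed; indeed the statement is false as a pointwise claim for surfaces in $H^3$. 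Once $\mathrm{II}$ is recovered pointwise, the fundamental theorem of hypersurfaces determines $\partial\widetilde M$ up to ambient isometry, and the rest of your argument (ends determined, Theorem \ref{endisometric}) goes through as in the paper.
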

\begin{pf}First we will show that the induced metric determines the
second fundamental form. But it is well-known that a hypersurface in
$H^n$ is completely determined by its induced metric and the second
fundamental form. In conclusion, a hypersurface in $H^n,\ n>3$ is
completely determined by its induced metric.  Once this is proved,
we can prove the theorem as follows. Lift M with a convex smooth
boundary into $H^n$. Then it is a domain $U$ bounded by
hypersurfaces corresponding to boundary components of $M$ in $H^n$.
Since each hypersurface $H$ is completely determined by its induced
metric, a component of $H^n\setminus U$ bounded by $H$ is completely
determined. Its quotient under the stabilizer of $H$ descends to an
end of $M$. So each end of $M$ is completely determined by the
induced metric on the boundary. Then by Theorem \ref{endisometric},
the hyperbolic metric on $M$ is uniquely determined.

Hence it suffices to show that the induced metric determines the
second fundamental form for dimension $>3$.

Let 
$II$ be the real-valued second fundamental form and 
$B$ symmetric operators defined by
$$II(x,y)=I(Bx,y)=I(x,By).$$
Or if $D$ is the Levi-Civita connection on $H^n$ and $S$ is a
hypersurface in it, then the shape operator $B:TS\ra TS$ is
defined by $Bx=-D_x N$ where $N$ is a unit normal vector field to
$S$ and it satisfies $II(x,y)=I(Bx,y)=I(x,By)$.

%

One can diagonalise $B$ with respect to an orthonormal basis $e_i$
with eigenvalues $b_i$. But the eigenvalues of $B$ can be deduced
from the sectional curvatures of the induced metric. By the Gauss
formula
$$K(x,y)=\tilde K(x,y)+ \bar K(x,y),$$ where $\tilde K$ is the
sectional curvature of $H^n$, which is $-1$ in this case, and
$\bar K$ is the Gaussian curvature. If $\{e_i\}_{i=1}^{n-1}$ is an
orthonormal basis for the hypersurface which diagonalize $II$, we
have
$$K(e_i,e_j)=-1 + b_ib_j.$$
If $n-1\geq 3$, $b_i$ has a solution.


Hence for a given induced metric, one can diagonalise
the second fundamental form using an orthogonal basis, and the
eigenvalues can be calculated from the sectional curvatures, so
the second funamental form is determined by the induced metric.
\end{pf}

\vskip .3 in
\noindent Gilles Courtois\\
 L'Institut de Mathématiques de Jussieu\\
4, place Jussieu, 75252 Paris Cedex 09, France\\
e-mail: courtois\char`\@ math.jussieu.fr\\

\noindent Inkang Kim\\
KIAS, School of Mathematics\\
Hoegiro 85, Dongdaemun-gu\\
Seoul, 130-722 Korea\\
e-mail: inkang\char`\@ kias.re.kr

\end{document}